\theoremstyle{plain}\newtheorem{definition}{Definition}[section]
\theoremstyle{definition}\newtheorem{theorem}{Theorem}[section]
\theoremstyle{plain}\newtheorem{lemma}[theorem]{Lemma}
\theoremstyle{plain}
\theoremstyle{plain}\newtheorem{prop}[theorem]{Proposition}
\theoremstyle{remark}\newtheorem{remark}{Remark}[section]
\newcommand{\Div}{\mathrm{div}\,}
\newcommand{\B}{\Big}
\newcommand{\be}{\begin{equation}}
\newcommand{\ee}{\end{equation}}
 \newcommand{\ba}{\begin{aligned}}
 \newcommand{\ea}{\end{aligned}}
\newcommand{\fbxo}{\int_{\tilde{B}_{k}}\!\!\!\!\!\!\!\!\!\! -~\,}
\newcommand{\fbxozero}{\int_{\tilde{B}_{k_{0}}}\!\!\!\!\!\!\!\!\!\!\!\! -~\,}
\newcommand{\fbxozeroo}{\int_{B_{k_{0}}}\!\!\!\!\!\!\!\!\!\!\!\! -~\,}
\newcommand{\fbx}{\int_{B_{k}}\!\!\!\!\!\!\!\!\!\! -~}
\newcommand{\fqxoo}{\iint_{\tilde{Q}_{k}} \!\!\!\!\!\!\!\!\!\!\!\!\!\!-\hspace{-0.2cm}-\hspace{-0.3cm}-\,~}
\newcommand{\fqxoth}{\iint_{\tilde{Q}_{3}} \!\!\!\!\!\!\!\!\!\!\!\!\!\!-\hspace{-0.2cm}-\hspace{-0.3cm}-\,~}
\newcommand{\fqxol}{\iint_{\tilde{Q}_{l}} \!\!\!\!\!\!\!\!\!\!\!\!\!-\hspace{-0.2cm}-\hspace{-0.3cm}-\,~}
\newcommand{\fqxolo}{\iint_{Q_{l}} \!\!\!\!\!\!\!\!\!\!\!\!\!-\hspace{-0.2cm}-\hspace{-0.3cm}-\,~}
\newcommand{\fqxolor}{\iint_{Q(r)} \!\!\!\!\!\!\!\!\!\!\!\!\!\!\!\!\!-\hspace{-0.24cm}-\hspace{-0.3cm}-\,~\,\,\,\,}
\newcommand{\fqxolorr}{\iint_{\tilde{Q}(r)} \!\!\!\!\!\!\!\!\!\!\!\!\!\!\!\!\!-\hspace{-0.24cm}-\hspace{-0.3cm}-\,~\,\,\,\,}
\providecommand{\bysame}{\leavevmode\hbox to3em{\hrulefill}\thinspace}
  \newcommand{\f}{\frac}
  \newcommand{\ben}{\begin{enumerate}}
   \newcommand{\een}{\end{enumerate}}
\newcommand{\ti}{\nabla}
\newcommand{\Rmnum}[1]{\expandafter\@slowromancap\romannumeral #1@}
\numberwithin{equation}{section}
\begin{document}
\title{On Wolf's regularity criterion   of   suitable weak solutions to the Navier-Stokes equations}
\author{Quanse Jiu \footnote{
School of Mathematical Sciences, Capital Normal University, Beijing 100048, PR China. Email: jiuqs@cnu.edu.cn},\,\,\,\,Yanqing Wang\footnote{ Department of Mathematics and Information Science, Zhengzhou University of Light Industry, Zhengzhou, Henan  450002,  P. R. China Email: wangyanqing20056@gmail.com}\;~ and\,
	Daoguo Zhou\footnote{
	College of Mathematics and Informatics, Henan Polytechnic University, Jiaozuo, Henan 454000, P. R. China Email:
	zhoudaoguo@gmail.com }}
\date{}
\maketitle
\begin{abstract}
In this paper, we consider the local regularity of suitable weak solutions to the 3D incompressible Navier-Stokes equations.
By means of the local pressure projection introduced by Wolf in \cite{[Wolf10],[W15]}, we present a
		$\varepsilon$-regularity criterion below of suitable weak solutions
$$
\iint_{Q(1)}|u|^{20/7}dxdt\leq \varepsilon,
$$
which gives an   improvement of  previous corresponding
results obtained
 in
Chae and    Wolf   \cite[ Arch. Ration. Mech. Anal., 225: 549-572, 2017]{[CW]}, in
Guevara and  Phuc   \cite[  Calc. Var., 56:68, 2017]{[GP]} and in Wolf \cite[ Ann. Univ. Ferrara, 61: 149-171, 2015]{[W15]}.
 \end{abstract}
\noindent {\bf MSC(2000):}\quad 76D03, 76D05, 35B33, 35Q35 \\\noindent
{\bf Keywords:} Navier-Stokes equations; suitable weak solutions;   regularity \\
\section{Introduction}
\label{intro}
\setcounter{section}{1}\setcounter{equation}{0}

We focus on   the following   incompressible Navier-Stokes equations in   three-dimensional space
	\be\left\{\ba\label{NS}
	&u_{t} -\Delta  u+ u\cdot\ti
	u+\nabla \Pi=0, ~~\Div u=0,\\
	&u|_{t=0}=u_0,
	\ea\right.\ee
	where $u $ stands for the flow  velocity field, the scalar function $\Pi$ represents the   pressure.   The
	initial  velocity $u_0$ satisfies   $\text{div}\,u_0=0$.

	In this paper, we are concerned with the regularity   of suitable weak solutions     to
	the 3D Navier-Stokes equations \eqref{NS}.
This kind of weak solutions
	obeys the local energy inequality below,
 for a.e. $t\in[-T,0]$ ,
 \begin{align}
 &\int_{\mathbb{R}^{3}} |u(x,t)|^{2} \phi(x,t) dx
 +2\int^{t}_{-T}\int_{\mathbb{R} ^{3 }}
  |\nabla u|^{2}\phi  dxds\nonumber\\ \leq&  \int^{t}_{-T }\int_{\mathbb{R}^{3}} |u|^{2}
 (\partial_{s}\phi+\Delta \phi)dxds
  + \int^{t}_{-T }
 \int_{\mathbb{R}^{3}}u\cdot\nabla\phi (|u|^{2} +2\Pi)dxds, \label{loc}
 \end{align} where non-negative function $\phi(x,s)\in C_{0}^{\infty}(\mathbb{R}^{3}\times (-T,0) )$.

 Before going further, we shall introduce some   notations utilized throughout this paper.
For $p\in [1,\,\infty]$, the notation $L^{p}((0,\,T);X)$ stands for the set of measurable functions on the interval $(0,\,T)$ with values in $X$ and $\|f(t,\cdot)\|_{X}$ belongs to $L^{p}(0,\,T)$.
  For simplicity,   we write $$\|f\| _{L^{p,q}(Q(r))}:=\|f\| _{L^{p}(-r^{2},0;L^{q}(B(r)))}~~   \text{and}~~
  \|f\| _{L^{p}(Q(r))}:=\|f\| _{L^{p,p}(Q(r))}, $$
  where $Q(r)=B(r)\times (t-r^{2}, t)$ and $ B(r)$ denotes the ball of center $x$ and radius $r$.

 Roughly speaking, the regularity of suitable weak solutions is intimately connected to $\varepsilon$-regularity criteria  	(see,
	e.g., \cite{[CKN],[RWW],[GP],[CW],[W15],[Wolf10],[Struwe],[TY],[HWZ],[Lin],[LS]}).
A well-known $\varepsilon$-regularity  criterion   is the following one with
$p=3$: there is an absolute constant $\varepsilon$ such that, if
 \be\label{k}
 \|u\|^{p}_{L^p(Q(1))}+\|\Pi\|^{p/2}_{L^{p/2}(Q(1))}<\varepsilon,\ee
 then $u$ is bounded in some neighborhood
of   point $(0, 0)$.
This was proved by  Lin in \cite{[Lin]}(see also Ladyzenskaja and  Seregin \cite{[LS]}).
 In \cite{[Kukavica]}, Kukavica proposed three questions regarding this   regularity criteria \eqref{k}
\begin{enumerate}[(1)]
		\item If this result holds for weak solutions which are not suitable.
\item It is not known if the regularity criteria holds for $p<3$ in \eqref{k}. \label{kq2}
\item If the pressure can be removed from the condition \eqref{k}.\label{kq3}
\end{enumerate}
  Recently, Guevara and Phuc \cite{[GP]} answered Kukavica's issue \eqref{kq2} via establishing following regularity criteria
		\be\label{GP}
		\|u\|_{L^{2p, 2q} (Q(1))}+\|\Pi\|_{L^{p, q}(Q(1))}< \varepsilon,
		~~~{3}/{ q}+{2}/{p}=
		7/2~~~\text{with}~1\leq q\leq2.\ee
Later, He,   Wang and   Zhou \cite{[HWZ]}
extended		Guevara and Phuc's results to \be\label{optical}\|u\|_{L^{p,q}(Q(1))}+\|\Pi\|_{L^{1}(Q(1))}<\varepsilon,~~1\leq 2/p+3/q <2, 1\leq p,\,q\leq\infty.\ee
 For the question \eqref{kq3},   Wolf introduced the local pressure projection (for the detail,
see Section \ref{sec2} ) $\mathcal{W}_{p,\Omega}:$ $W^{-1,p}(\Omega)\rightarrow W^{-1,p}(\Omega)$ $(1<p<\infty)$ for a given bounded $C^{2}(\Omega)$ domain $\Omega\subseteq \mathbb{R}^{n}$ in  \cite{[Wolf10],[W15]} and obtained a $\varepsilon$-regularity criterion below
\be \label{wolf}
\iint_{Q(1)}|u|^{3}dxdt< \varepsilon.
\ee
In addition, very recently, in \cite{[CW]},
  Wolf and Chae studied Liouville type theorems for self-similar solutions to the Navier-Stokes
equations  by proving $\varepsilon$-regularity criteria
\be \label{wolfchae}
\sup_{-1\leq t\leq0}\int_{B(1)}|u|^{q}dx < \varepsilon,~~\f32<q\leq3.
\ee
Based  Kukavica's questions and recent  progresses \eqref{GP}-\eqref{wolf}, a natural issue is   weather the regularity criteria \eqref{k} holds for $p<3$ without pressure. The goal of this paper is devoted to this. Before we state our results, we roughly mention the novelty in \cite{[Wolf10],[W15],[CW]}.
 For any ball $B(R)\subseteq \mathbb{R}^{3}$, Wolf et al. introduced the definitions
   $$\nabla\Pi_{h}=-\mathcal{W}_{p,B(R)}(u),~~ \nabla\Pi_{1}=\mathcal{W}_{p,B(R)}(\Delta u),~~\nabla\Pi_{2}=-\mathcal{W}_{p,B(R)}( u\cdot\nabla u),$$ and set $v=u+\nabla\Pi_{h}$, then,  
   the   local energy inequality reads,
for a.e. $t\in[-T,0]$ and non-negative function $\phi(x,s)\in C_{0}^{\infty}(\mathbb{R}^{3}\times (-T,0) )$,
			 \begin{align}
  &\int_{B(r)}|v|^2\phi (x,t)  d  x+ \int^{t}_{-T }\int_{B(r)}\big|\nabla v\big|^2\phi (x,t) d  x ds  \tau\nonumber\\  \leq&   \int^{t}_{-T }\int_{B(r)} | v |^2(  \Delta \phi +  \partial_{t}\phi )  d  x d s +\int^{t}_{-T }\int_{B(r)}|v|^{2}u\cdot\nabla \phi    dsds\nonumber\\
& +\int^{t}_{-T }\int_{B(r)} \phi ( u\otimes v :\nabla^{2}\Pi_{h} ) dsds   +\int^{t}_{-T }\int_{B(r)} \phi \Pi_{1}v\cdot\nabla \phi   dxds+\int^{t}_{-T }\int_{B(r)} \phi \Pi_{2}v\cdot\nabla \phi   dxds.\label{wloc1}
 \end{align}
It is worth pointing out that any usual suitable weak solutions to the Navier-Stokes system enjoys the   local energy
inequality   \eqref{wloc1}. We refer the reader to
 \cite[Appendix A, p1372]{[CW17]} for its proof.
As stated in \cite{[Wolf10],[W15],[CW]}, the advantage of  local energy
inequality
\eqref{wloc1} removed   the non-local effect of the pressure term. Based on this,
Caccioppoli type inequalities are derived in \cite{[Wolf10],[CW]}, respectively,
 \begin{align}
 &\|u\|^{2}_{L^{3,\f{18}{5}}Q(\f{1}{2})}+ \|\nabla u\|^{2}_{L^{2}(Q(\f{1}{2}))}
 \leq
  C \|  u\|^{2}_{L^{3}(Q(1))}+C\|  u\|^{3}_{L^{3}(Q(1))}.
   \label{Wolf10}\\
   \label{cw}
 &\|u\|^{2}_{L^{3,\f{18}{5}}Q(\f{1}{2})}+  \|\nabla u\|^{2}_{L^{2}(Q(\f{1}{2}))}
 \leq
  C \|  u\|^{2}_{L^{\f{3q}{2q-3},q}(Q(1))}+C\|  u\|^{\f{3q}{2q-3}}_{L^{\f{3q}{2q-3},q}(Q(1))},~~~\f32<q\leq3.
 \end{align}
Our first result   is to derive a new Caccioppoli type inequality
\begin{prop}
Assume that $u$ is a   suitable weak solutions to the Navier-Stokes equations. There holds
\be
 \|u\|^{2}_{L^{\f{20}{7},\f{15}{4}}Q(\f{1}{2})}+ \|\nabla u\|^{2}_{L^{2}(Q(\f{1}{2}))}
 \leq
  C \|  u\|^{2}_{L^{\f{20}{7}}(Q(1))}+C\|  u\|^{4}_{L^{\f{20}{7}}(Q(1))}.
  \label{wwzc}\ee
\label{the1.1}\end{prop}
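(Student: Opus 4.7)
The strategy is to apply the Wolf-type local energy inequality \eqref{wloc1} to the modified velocity $v=u+\nabla\Pi_{h}$, estimate each of its five right-hand terms by H\"older's inequality combined with the harmonic and Calder\'on--Zygmund bounds on the three local pressures $\Pi_{h},\Pi_{1},\Pi_{2}$, absorb the resulting energy norms of $v$ into the left by Young's inequality and a concentric-radii iteration, and finally upgrade the energy bound on $v$ to the $L^{20/7,15/4}$ bound on $u$ by parabolic interpolation. Fix $1/2\le\rho<r\le 1$ and a non-negative cutoff $\phi\in C_{0}^{\infty}(Q(r))$ with $\phi\equiv 1$ on $Q(\rho)$, $|\nabla\phi|\lesssim (r-\rho)^{-1}$, and $|\partial_{t}\phi|+|\nabla^{2}\phi|\lesssim (r-\rho)^{-2}$; set $E(r):=\sup_{t}\|v\|^{2}_{L^{2}(B(r))}+\|\nabla v\|^{2}_{L^{2}(Q(r))}$.

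I will rely on three consequences of the projection $\mathcal{W}_{p,B(r)}$: (i) $\nabla\Pi_{h}=-\mathcal{W}_{p,B(r)}(u)$ is harmonic in $B(r)$, so by interior regularity of harmonic functions $\|\nabla^{k+1}\Pi_{h}\|_{L^{\infty}(B(\rho))}\lesssim (r-\rho)^{-k-3/p}\|u\|_{L^{p}(B(r))}$ for every $k\ge 0$ and $1<p<\infty$; (ii) $\|\Pi_{1}\|_{L^{p}(B(r))}\lesssim\|\nabla u\|_{L^{p}(B(r))}$, from $\Pi_{1}=\mathcal{W}_{p,B(r)}(\nabla\cdot\nabla u)$ and the Calder\'on--Zygmund boundedness of $\mathcal{W}_{p,B(r)}$; (iii) $\|\Pi_{2}\|_{L^{p}(B(r))}\lesssim\|u\|^{2}_{L^{2p}(B(r))}$, from $u\cdot\nabla u=\nabla\cdot(u\otimes u)$. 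With these in hand, the two linear terms $\iint|v|^{2}(\partial_{t}\phi+\Delta\phi)$ and $\iint\phi\,u\otimes v:\nabla^{2}\Pi_{h}$ are controlled, via H\"older and (i), by $C(r-\rho)^{-2}\|v\|^{2}_{L^{2}(Q(r))}+C\|u\|^{2}_{L^{20/7}(Q(r))}$. The cubic term $\iint|v|^{2}u\cdot\nabla\phi$ is first rewritten via integration by parts using $\Div u=0$ as $-2\iint\phi u_{i}v_{j}\partial_{i}v_{j}$, after which H\"older and Young trade one factor of $\nabla v$ against the dissipation on the left and leave a factor of $|u||v|$ whose space-time integral is absorbed using the parabolic embedding $L^{\infty}_{t}L^{2}_{x}\cap L^{2}_{t}L^{6}_{x}\hookrightarrow L^{p,q}_{t,x}$ (valid for $2/p+3/q=3/2$).

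The $\Pi_{1}$ term is bounded by $(r-\rho)^{-1}\|\Pi_{1}\|_{L^{2}(Q(r))}\|v\|_{L^{2}(Q(r))}\lesssim (r-\rho)^{-1}\|\nabla u\|_{L^{2}(Q(r))}\|v\|_{L^{2}(Q(r))}$ using (ii), with $\|\nabla u\|_{L^{2}}$ replaced by $\|\nabla v\|_{L^{2}}+\|\nabla^{2}\Pi_{h}\|_{L^{2}}$ and the first piece absorbed via Young into the dissipation on the left. The $\Pi_{2}$ term is bounded by $(r-\rho)^{-1}\|\Pi_{2}\|_{L^{10/7}(Q(r))}\|v\|_{L^{10/3}(Q(r))}\lesssim (r-\rho)^{-1}\|u\|^{2}_{L^{20/7}(Q(r))}E(r)^{1/2}$ by (iii) and the $L^{10/3}$ parabolic interpolation of $v$; Young's inequality here produces precisely the quartic term $\|u\|^{4}_{L^{20/7}}$ in \eqref{wwzc}. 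Collecting all estimates, choosing small parameters in Young to absorb $E(\rho)$-like contributions on the left, and applying the standard Giaquinta--Giusti radius-iteration lemma to eliminate the $(r-\rho)^{-N}$ weights yields $E(1/2)\le C\|u\|^{2}_{L^{20/7}(Q(1))}+C\|u\|^{4}_{L^{20/7}(Q(1))}$. Since $2/(20/7)+3/(15/4)=7/10+4/5=3/2$, the parabolic embedding $L^{\infty}_{t}L^{2}_{x}\cap L^{2}_{t}L^{6}_{x}\hookrightarrow L^{20/7}_{t}L^{15/4}_{x}$ upgrades this to a bound for $\|v\|^{2}_{L^{20/7,15/4}(Q(1/2))}$; the identity $u-v=-\nabla\Pi_{h}$ combined with $\|\nabla\Pi_{h}\|_{L^{20/7,15/4}(Q(1/2))}\lesssim\|u\|_{L^{20/7}(Q(1))}$ from (i) then transfers the estimate to $u$ and gives \eqref{wwzc}.

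The main obstacle is the cubic term: with only $u\in L^{20/7}$, there is not enough integrability to bound $\iint|v|^{2}|u|$ directly by a quadratic energy of $v$, so the integration by parts into $\iint\phi u_{i}v_{j}\partial_{i}v_{j}$ and the subsequent trade of one $\nabla v$ factor against the dissipation are essential. The exponent $20/7$ is critical in the sense that $L^{40/13}$ (dual to $L^{20/7}$ at quadratic scaling) and $L^{10/3}$ (used in the $\Pi_{2}$ pairing) both lie inside the parabolic interpolation of the energy norms of $v$, and the output space $L^{20/7,15/4}$ sits exactly on the scaling line $2/p+3/q=3/2$; any smaller exponent would push one of these spaces outside the reachable range and destroy the scheme.
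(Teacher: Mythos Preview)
Your overall strategy is correct (apply \eqref{wloc1}, estimate each right-hand term, absorb $v$-norms by Young, iterate over concentric radii, then upgrade via parabolic interpolation and $u=v-\nabla\Pi_{h}$), but there is a genuine gap in your treatment of the cubic convection term $\iint|v|^{2}u\cdot\nabla\phi$. After your integration by parts and Young step you are left with $\iint\phi^{2}|u|^{2}|v|^{2}$. With only $u\in L^{20/7}(Q(r))$, H\"older forces $v\in L^{20/3}(Q(r))$; but the energy of $v$ only yields $v\in L^{p,q}$ on the line $2/p+3/q=3/2$, whereas $2/(20/3)+3/(20/3)=3/4$. No mixed-norm splitting helps: taking $|u|^{2}\in L^{10/7,10/7}$ forces $|v|^{2}\in L^{10/3,10/3}$, i.e.\ $v\in L^{20/3,20/3}$, which is strictly beyond the parabolic interpolation range. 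Hence this term cannot be absorbed and the scheme breaks down. (A smaller issue: your claimed bound $C(r-\rho)^{-2}\|v\|^{2}_{L^{2}}+C\|u\|^{2}_{L^{20/7}}$ for $\iint\phi\,u\otimes v:\nabla^{2}\Pi_{h}$ is also off---this term is cubic in $u$ and in the paper it produces the quartic $\|u\|^{4}_{L^{20/7}}$, not a quadratic contribution.)

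The paper does \emph{not} integrate by parts. Instead it tests with $\phi^{4}$ and splits $|v|^{2}u\cdot\nabla(\phi^{4})$ by H\"older with exponents $(10/3,\,20/7,\,20/7)$ as
\[
\iint|v|^{2}\phi^{3}\,u\cdot\nabla\phi
\;\le\;\frac{C}{\rho-r}\,\|v\phi^{2}\|_{L^{10/3}}\,\|v\|_{L^{20/7}}\,\|u\|_{L^{20/7}}.
\]
The first factor is absorbed into $\|v\phi^{2}\|^{2}_{L^{\infty,2}}+\|\nabla(v\phi^{2})\|^{2}_{L^{2}}$ via \eqref{sampleinterplation}. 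The crucial point is the second factor: the ``bare'' $v$ (carrying no cutoff) is not absorbed at all---it is simply bounded by $\|u\|_{L^{20/7}}$ through $v=u+\nabla\Pi_{h}$ and \eqref{wp1}. This is exactly what converts the cubic term into $C(\rho-r)^{-2}\|u\|^{4}_{L^{20/7}}$ and makes the exponent $20/7$ admissible. The same device (splitting off one $v\phi^{2}\in L^{10/3}$ and bounding the remaining factors by $\|u\|_{L^{20/7}}$) handles the $u\otimes v:\nabla^{2}\Pi_{h}$ and $\Pi_{2}$ terms. After this, the paper keeps the raw $\tfrac{1}{16}\|\nabla u\|^{2}_{L^{2}(Q(\rho))}$ from the $\Pi_{1}$ term on the right and removes it together with the $(\rho-r)^{-N}$ weights by applying Lemma~\ref{iter1} to the functional $I(\sigma)=\|u\|^{2}_{L^{20/7,15/4}(Q(\sigma))}+\|\nabla u\|^{2}_{L^{2}(Q(\sigma))}$.
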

This Caccioppoli type inequality allows us to obtain our main result
 \begin{theorem}\label{the1.2}
		Let  the pair $(u,  \Pi)$ be a suitable weak solution to the 3D Navier-Stokes system \eqref{NS} in $Q(1)$.
		There exists an absolute positive constant $\varepsilon$
		such that if   $u$ satisfies	\be\label{jww}\|u\|_{L^{20/7}(Q(1))} <\varepsilon,\ee
		then, $u\in L^{\infty}(Q(1/16)).$
	\end{theorem}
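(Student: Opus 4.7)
The strategy is to bootstrap the smallness of $\|u\|_{L^{20/7}(Q(1))}$ up to smallness of $\|u\|_{L^3}$ on a smaller cylinder, at which point Wolf's $\varepsilon$-regularity criterion \eqref{wolf} applies pointwise, and a standard covering argument delivers $u\in L^\infty(Q(1/16))$.

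The starting point is Proposition \ref{the1.1}: under the hypothesis $\|u\|_{L^{20/7}(Q(1))}<\varepsilon$, for $\varepsilon$ small one obtains
$$\|u\|_{L^{20/7,\,15/4}(Q(1/2))} + \|\nabla u\|_{L^2(Q(1/2))} \le C(\varepsilon+\varepsilon^2)\le C\varepsilon.$$
The proof of Proposition \ref{the1.1} rests on the modified local energy inequality \eqref{wloc1} for the auxiliary velocity $v=u+\nabla\Pi_h$, and a re-examination of that step simultaneously yields $\|v\|_{L^\infty_t L^2_x(Q(1/2))}\le C\varepsilon$. Using the $L^p$-continuity of the local pressure projection $\mathcal{W}_{p,B(R)}$ for every $1<p<\infty$ together with interior estimates for the harmonic field $\nabla\Pi_h$, this $L^\infty_t L^2_x$-bound transfers to $u$ itself on $Q(1/4)$; combined with Poincar\'e--Sobolev applied to $\nabla u\in L^2$, one also obtains $\|u\|_{L^2_t L^6_x(Q(1/4))}\le C\varepsilon$.

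The standard parabolic embedding $L^\infty_t L^2_x\cap L^2_t L^6_x\hookrightarrow L^{10/3}_{t,x}$ combined with H\"older on the bounded cylinder $Q(1/4)$ then yields
$$\|u\|_{L^3(Q(1/4))}\le C\|u\|_{L^{10/3}(Q(1/4))}\le C\varepsilon.$$
For any $(x_0,t_0)\in\overline{Q(1/16)}$, the cylinder $Q_{1/8}(x_0,t_0):=B_{1/8}(x_0)\times(t_0-1/64,t_0)$ is contained in $Q(1/4)$, hence $\iint_{Q_{1/8}(x_0,t_0)}|u|^3\,dx\,dt\le C\varepsilon^3$. Under the parabolic rescaling $u_{1/8}(y,s):=\tfrac{1}{8}u(x_0+\tfrac{1}{8}y,t_0+\tfrac{1}{64}s)$, a change of variables gives $\iint_{Q(1)}|u_{1/8}|^3\,dy\,ds = 64\iint_{Q_{1/8}(x_0,t_0)}|u|^3\,dx\,dt\le 64C\varepsilon^3$, which can be made strictly smaller than the absolute constant in \eqref{wolf} by choosing $\varepsilon$ small enough. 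Wolf's $L^3$-criterion therefore yields boundedness of $u$ in a neighborhood of $(x_0,t_0)$, and a finite covering of $\overline{Q(1/16)}$ concludes $u\in L^\infty(Q(1/16))$.

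The key technical obstacle is the $v\to u$ transfer of the $L^\infty_t L^2_x$ bound: controlling $\nabla\Pi_h$ in $L^\infty_t L^2_x$ on a slightly smaller ball requires combining the $L^p$-boundedness of the local pressure projection with interior harmonic regularity, in the spirit of the pressure-handling already carried out in \cite{[Wolf10],[W15],[CW]}. Once this conversion is settled, the remaining steps are a routine parabolic interpolation and a direct invocation of the already established $L^3$-criterion \eqref{wolf}.
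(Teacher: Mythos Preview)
There is a genuine gap at the $v\to u$ transfer of the $L^\infty_t L^2_x$ bound. The obstacle you flag in your last paragraph is not a technicality but the central difficulty of the theorem, and the tools you invoke do not resolve it. The local pressure projection gives $\|\nabla\Pi_h(\cdot,t)\|_{L^p(B(R))}\le C\|u(\cdot,t)\|_{L^p(B(R))}$ \emph{pointwise in $t$}; interior harmonic regularity can upgrade the spatial exponent on a smaller ball, but does nothing for the time variable. Hence passing to $\sup_t$ on the left forces $\sup_t\|u(\cdot,t)\|_{L^p}$ on the right, which is exactly the quantity you do not control: the hypothesis is only $u\in L^{20/7}_{t,x}$ with small norm. (In \cite{[CW]} the hypothesis is $u\in L^\infty_t L^q_x$, so this transfer is available there; it is not here.) The paper says this explicitly in the introduction: ``the velocity field $u$ losses the kinetic energy $\|u\|_{L^{\infty,2}}$.'' Without $\|u\|_{L^\infty_t L^2_x}\lesssim\varepsilon$, your interpolation to $L^{10/3}_{t,x}$ and then $L^3_{t,x}$ collapses, and no combination of the norms actually at hand ($\|u\|_{L^{20/7}_t L^{15/4}_x}$, $\|\nabla u\|_{L^2_{t,x}}$, $\|u\|_{L^2_t L^6_x}$) embeds into $L^3_{t,x}$ on a bounded cylinder: the time exponent $20/7<3$ is the obstruction.

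The paper circumvents this by never attempting the transfer at the energy level. Instead it runs a full Caffarelli--Kohn--Nirenberg induction \emph{entirely in terms of $v$}: using a scaled backward heat kernel as test function in \eqref{wloc1} together with a decay estimate for the local pressure pieces $\Pi_1,\Pi_2$ (Proposition~\ref{keyinindu} and Lemma~\ref{CW}), one proves inductively that $r_k^{-5}\iint_{\tilde Q_k}|v|^{10/3}\le\varepsilon_1^{2/3}$ for all $k\ge3$, whence by Lebesgue differentiation $|v|\le C$ pointwise on $Q(1/8)$. Only then does one return to $u$: since $\|\nabla\Pi_h(\cdot,t)\|_{L^\infty(B(1/8))}\le C\|u(\cdot,t)\|_{L^{20/7}(B(1))}$ and the right-hand side lies in $L^{20/7}_t$ by hypothesis, one obtains $u\in L^{20/7}_t L^\infty_x(Q(1/8))$, a Serrin-subcritical space, and regularity follows. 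The shortcut through Wolf's $L^3$-criterion \eqref{wolf} is not available here.
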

\begin{remark}
This theorem is an improvement of corresponding results in \eqref{GP}-\eqref{wolfchae}.
\end{remark}
We give some comments on the proof of Proposition \ref{the1.1} and Theorem  \ref{the1.2}.
Though the non-local pressure   disappears in the local energy inequality in
\eqref{wloc1}, the  velocity field $u$ losses the  kinetic energy $\|u\|_{L^{\infty,2}}.$
In contrast with works \cite{[Wolf10],[CW]}, owing to $\|u\|^{2}_{L^{3,\f{18}{5}}Q(\f{1}{2})} $
appearing in Caccioppoli type inequalities in \eqref{Wolf10}-\eqref{cw} and  without the  kinetic energy of $u$, it seems to be difficult to apply the argument used in \cite{[Wolf10],[CW],[GP],[HWZ]} directly to obtain \eqref{wwzc}. To circumvent these difficulties, first, we observe that  every nonlinear term
contain at least $v$ in the local energy inequality \eqref{wloc1}. Meanwhile, $v$ enjoys all the energy, namely, $\|v\|_{L^{\infty}L^{2}}$ and $\|v\|_{L^{2}L^{2}}$. It would be natural to absorb $v$ by the left hand of local energy inequality   \eqref{wloc1}.
Second, we establish the Caccioppoli type inequality for $ \|u\|^{2}_{L^{\f{20}{7},\f{15}{4}}Q(\f{1}{2})}+ \|\nabla u\|^{2}_{L^{2}(Q(\f{1}{2}))}$ instead of
$\|u\|^{2}_{L^{3,\f{18}{5}}Q(\f{1}{2})}+ \|\nabla u\|^{2}_{L^{2}(Q(\f{1}{2}))}$.
However, this is not enough to yield the desired result, which is completely different from that in \cite{[GP],[HWZ]}. To this end, in the spirit of \cite{[CW]}, we utilize  Caccioppoli type inequality \eqref{wloc1} and
induction arguments developed in \cite{[TY],[CKN],[CW],[RWW]} to complete the proof of Theorem \ref{the1.2}. Third,   to the knowledge of authors, all previous authors in \cite{[TY],[CKN],[CW],[RWW]}
invoked induction arguments for  $ \fqxoo |v|^{3}
\leq \varepsilon_{1}^{2/3}.$
To bound the term $\iint |v|^{2}\nabla\Pi_{h} \cdot\nabla\phi d  \tau$ in local energy inequality \eqref{wloc1} by $ \fqxoo |v|^{3}
\leq \varepsilon_{1}^{2/3},$  one needs  $u\in L^p(I;\|\cdot\|)$ with $p\geq3$, where $I$ is an time interval.
However,
from \eqref{wwzc}, we have $u\in L^p(I;\|\cdot\|)$ with $p<3$, therefore,
induction arguments with
 $ \fqxoo |v|^{3}
\leq \varepsilon_{1}^{2/3},$ seems to break down in our case. As said above, since we have all the energy of $v$, we work with
 $$ \fqxoo |v|^{\f{10}{3}}\leq \varepsilon_{1}^{2/3},$$
  in induction arguments. Finally, this
enables us to achieve the proof of Theorem \ref{the1.2}.

The remainder of this paper  is structured as follows. In section \ref{sec2}, we explain the detail of   Wolf's the local pressure projection $\mathcal{W}_{p,\Omega}$ and present the definition of local suitable weak solutions. Then, we recall some
 interior estimates of harmonic functions, an interpolation inequality, two classical  iteration lemmas and establish an auxiliary lemma utilized in induction arguments.
 The Caccioppoli type inequality \eqref{wwzc} is derived in
  Section \ref{sec3}.
 Section \ref{sec4} is devoted to the proof of Theorem \ref{the1.2}.

{\bf Notations:}
Throughout this paper, we denote
\begin{align*}
     &B(x,\mu):=\{y\in \mathbb{R}^{n}||x-y|\leq \mu\}, && B(\mu):= B(0,\mu), && \tilde{B}(\mu):=B(x_{0},\,\mu),\\
        &Q(x,t,\mu):=B(x,\,\mu)\times(t-\mu^{2\alpha}, t),  && Q(\mu):= Q(0,0,\mu), && \tilde{Q}(\mu):= Q(x_{0},t_{0},\mu),\\
          &r_{k}=2^{-k},\quad &&\tilde{B}_{k}:= \tilde{B}(r_{k}), \quad ~~ &&\tilde{Q}_{k}:=\tilde{Q}(r_{k}).
\end{align*}
  Denote
  the average of $f$ on the set $\Omega$ by
  $\overline{f}_{\Omega}$. For convenience,
  $\overline{f}_{r}$ represents  $\overline{f}_{B(r)}$ and $\overline{\Pi}_{\tilde{B}_{k}}$
  is denoted by $\tilde{\Pi}_{k}$.
  $|\Omega|$ represents the Lebesgue measure of the set $\Omega$. We will use the summation convention on repeated indices.
 $C$ is an absolute constant which may be different from line to line unless otherwise stated in this paper.

\section{Preliminaries}\label{sec2}

We begin with  Wolf's the local pressure projection $\mathcal{W}_{p,\Omega}:$ $W^{-1,p}(\Omega)\rightarrow W^{-1,p}(\Omega)$ $(1<p<\infty)$.
 More precisely, for any  $f\in W^{-1,p}(\Omega)$, we define $W^{-1,p}(f)= \nabla\Pi$, where $\Pi$ satisfies \eqref{GMS}.
Let $\Omega$  be a  bounded domain with $\partial\Omega\in C^{1}$.
According to the $L^p$ theorem of Stokes system in \cite[Theorem 2.1, p149]{[GSS]},
there exists a unique pair $(u,\Pi)\in W^{1,p}(\Omega)\times L^{p}(\Omega)$ such that
\be\label{GMS}
-\Delta u+\nabla\Pi=f,~~ \text{div}\,u=0, ~~u|_{\partial\Omega}=0,~~ \int_{\Omega}\Pi dx=0.
\ee
Moreover, this pair is subject to the inequality
$$
\|u\|_{W^{1,q}(\Omega)}+\|\Pi\|_{L^q(\Omega)}\leq C\|f\|_{W^{-1,q}(\Omega)}.
$$
Let $\nabla\Pi= \mathcal{W}_{p,\Omega}(f)$ $(f\in L^p(\Omega))$, then $\|  \Pi\|_{L^p(\Omega)}\leq C\|f\|_{L^p(\Omega)},$ where we used the fact that $L^{p}(\Omega)\hookrightarrow W^{-1,p}(\Omega)$.  Moreover, from $\Delta \Pi=\text{div}\,f$, we see that $\|  \nabla\Pi\|_{L^p(\Omega)}\leq C(\|f\|_{L^p(\Omega)}+ \|  \Pi\|_{L^p(\Omega)}) \leq C\|f\|_{L^p(\Omega)}.$
Now, we  present the definition of suitable weak solutions of Navier-Stokes equations \eqref{NS}.
	\begin{definition}\label{defi}
		A  pair   $(u, \,\Pi)$  is called a suitable weak solution to the Navier-Stokes equations \eqref{NS} provided the following conditions are satisfied,
		\begin{enumerate}[(1)]
			\item $u \in L^{\infty}(-T,\,0;\,L^{2}(\mathbb{R}^{3}))\cap L^{2}(-T,\,0;\,\dot{H}^{1}(\mathbb{R}^{3})),\,\Pi\in
			L^{3/2}(-T,\,0;L^{3/2}(\mathbb{R}^{3}));$\label{SWS1}
			\item$(u, ~\Pi)$~solves (\ref{NS}) in $\mathbb{R}^{3}\times (-T,\,0) $ in the sense of distributions;\label{SWS2}
			\item The local energy inequality \eqref{wloc1} is valid. In addition, $ \nabla\Pi_{h}, \nabla\Pi_{1}$ and $\nabla\Pi_{2}$ meet the following fact
	\begin{align}   &\|\nabla\Pi_{h}\|_{L^p(B(R))}\leq  \|u\|_{L^p(B(R))}, \label{ph}\\
 &\|\nabla\Pi_{1}\|_{L^2(B(R))}\leq  \|\nabla u\|_{L^2(B(R))},\label{p1}\\
 &\|\nabla\Pi_{2}\|_{L^{p/2}(B(R))}\leq  \| |u|^{2}\|_{L^{p/2}(B(R))}.\label{p2}
\end{align}	\end{enumerate}
	\end{definition}
	\noindent
We list some
interior estimates
of  harmonic functions $\Delta h=0$, which will be frequently utilized later. Let $1\leq p,q\leq\infty$ and $p<r<\rho$, then, it holds
\be\label{h1}\|\nabla^{k}h\|_{L^{q}
(B(r))}\leq \f{Cr^{\f{n}{q}}}{(\rho-r)^{\f{n}{p}+k}}\|h\|_{L^{p}(B(\rho))}.\ee
\be\label{h2}
 \| h-\overline{h}_{r}\|_{L^{q}
(B(r))}\leq \f{Cr^{\f{n}{q}+1}}{(\rho-r)^{\f{n}{q}+1 }}\|h-\overline{h}_{\rho}\|_{L^{q}(B(\rho))}.\ee
 The proof of \eqref{h1} rests on the mean value property of harmonic functions. This together with
  mean value theorem leads to \eqref{h2}. We leave the detail to the reader.
 For reader's convenience, we recall an interpolation inequality.
For each $2\leq l\leq\infty$ and $2\leq k\leq6$ satisfying $\f{2}{l}+\f{3}{k}=\f{3}{2}$, according to the H\"{o}lder   inequality  and the Young  inequality, we know that
\begin{align}
\|u\|_{L^{ k,l}(Q(\mu))}&\leq C\|u\|_{L^{2,\infty}(Q(\mu))}^{1-\f {2} {l}}\|u\|_{L^{6,2}(Q(\mu))}^{\f {2} {l}}\nonumber\\
&\leq C\|u\|_{L^{2,\infty} (Q(\mu))}^{1-\f {2} {l}}(\|u\|_{L^{2,\infty} (Q(\mu))}
+\|\nabla u\|_{L^2(Q(\mu))})^{\f {2} {l}}\nonumber\\
&\leq C (\|u\|_{L^{2,\infty} (Q(\mu))}
+\|\nabla u\|_{L^2(Q(\mu))}).\label{sampleinterplation}
\end{align}
In additon, we recall two well-known iteration lemmas.		
	\begin{lemma}\cite[Lemma 2.1,   p.86 ]{[Giaquinta]}\label{iter2}
Let $\phi(t)$ be a nonegative and nondecreasing functions on [0,R]. Suppose that
$$\phi(\rho)\leq A\B[\B(\f{\rho}{r}\B)^{\alpha}+\varepsilon\B]\phi(r)+Br^{\beta}$$
for any $0<\rho\leq r\leq R$, with $A,B,\alpha,\beta$ nonnegative constants and $\beta<\alpha$. Then for any $\gamma\in(\beta,\alpha)$, there exists a constant $\varepsilon_{0}$ such that if $\varepsilon<\varepsilon_{0}$ we have for all $0<\rho\leq r\leq R$
$$ \phi(\rho)\leq C\B\{\B(\f{\rho}{r}\B)^{\beta}\phi(r)+B \rho^{\beta}\B\}.$$
where $c$ is a positive constant depending on $A,\alpha,\beta,\gamma.$
\end{lemma}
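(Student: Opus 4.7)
The plan is the classical geometric-iteration (Campanato-type) argument. The idea is to reduce the continuous hypothesis, valid for every $0<\rho\le r\le R$, to a discrete iteration along a geometric sequence $\rho=\tau^{k}r$ after choosing the ratio $\tau\in(0,1)$ small enough, and then $\varepsilon_{0}$ small enough, so that the $\alpha$-power and the $\varepsilon$-perturbation are both absorbed into a $\gamma$-power factor; once the discrete inequality is iterated, monotonicity of $\phi$ lifts the bound back to arbitrary $\rho\in(0,r]$.

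First, since $\alpha>\gamma$, I fix $\tau\in(0,1)$ with $2A\tau^{\alpha}\le\tau^{\gamma}$ (possible because $\alpha-\gamma>0$). With this $\tau$ frozen I set $\varepsilon_{0}:=\tau^{\gamma}/(2A)$, so that $A(\tau^{\alpha}+\varepsilon)\le\tau^{\gamma}$ whenever $\varepsilon<\varepsilon_{0}$. Putting $\rho=\tau r$ in the hypothesis then yields
\begin{equation*}
\phi(\tau r)\le\tau^{\gamma}\phi(r)+B\,r^{\beta}.
\end{equation*}

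Second, I iterate this inequality along $r_{k}:=\tau^{k}r$. The one-step bound gives $\phi(r_{k+1})\le\tau^{\gamma}\phi(r_{k})+B\,r_{k}^{\beta}$, and a direct induction produces
\begin{equation*}
\phi(r_{k})\le\tau^{k\gamma}\phi(r)+B\,r^{\beta}\sum_{j=0}^{k-1}\tau^{(k-1-j)\gamma}\tau^{j\beta}.
\end{equation*}
Because $\beta<\gamma$ the geometric sum is convergent and bounded by $C(\tau,\beta,\gamma)\,\tau^{k\beta}$, so
\begin{equation*}
\phi(r_{k})\le\tau^{k\gamma}\phi(r)+C\,B\,r^{\beta}\tau^{k\beta}.
\end{equation*}

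Finally, for an arbitrary $\rho\in(0,r]$ I choose $k$ with $r_{k+1}<\rho\le r_{k}$ and invoke the monotonicity of $\phi$: using $\tau^{k}\le \tau^{-1}(\rho/r)$ and $r^{\beta}\tau^{k\beta}\le\tau^{-\beta}\rho^{\beta}$,
\begin{equation*}
\phi(\rho)\le\phi(r_{k})\le C\left[(\rho/r)^{\gamma}\phi(r)+B\,\rho^{\beta}\right],
\end{equation*}
which, since $(\rho/r)^{\gamma}\le(\rho/r)^{\beta}$ for $\rho\le r$, implies the stated conclusion. The only real obstacle is the bookkeeping: one must pick $\tau$ \emph{first} (forced by the gap $\alpha-\gamma>0$) and $\varepsilon_{0}$ \emph{second} (depending on that $\tau$ and on $A$), and the summation of the geometric series is exactly where the hypothesis $\gamma>\beta$ enters.
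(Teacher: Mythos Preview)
Your argument is correct and is precisely the classical geometric-iteration proof that appears in Giaquinta's book, which the paper cites but does not reproduce; the paper itself offers no proof of this lemma, merely stating it with the reference \cite[Lemma 2.1, p.~86]{[Giaquinta]}. Your bookkeeping is right: the choice of $\tau$ uses $\alpha>\gamma$, the choice of $\varepsilon_{0}$ then depends on $\tau$ and $A$, the geometric series converges because $\gamma>\beta$, and the final weakening from $(\rho/r)^{\gamma}$ to $(\rho/r)^{\beta}$ (since $\rho\le r$) matches the form stated in the paper.
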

\begin{lemma}\label{iter1}\cite[Lemma V.3.1,   p.161 ]{[Giaquinta]}
			Let $I(s)$ be a bounded nonnegative function in the interval $[r, R]$. Assume that for every $\sigma, \rho\in [r, R]$ and  $\sigma<\rho$ we have			$$I(\sigma)\leq A_{1}(\rho-\sigma)^{-\alpha_{1}} +A_{2}(\rho-\sigma)^{-\alpha_{2}} +A_{3}+ \ell I(\rho)$$
			for some non-negative constants  $A_{1}, A_{2}, A_{3}$, non-negative exponents $\alpha_{1}\geq\alpha_{2}$ and a parameter $\ell\in [0,1)$. Then there holds
			$$I(r)\leq c(\alpha_{1}, \ell) [A_{1}(R-r)^{-\alpha_{1}} +A_{2}(R-r)^{-\alpha_{2}} +A_{3}].$$
		\end{lemma}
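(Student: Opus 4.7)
The plan is to run the standard Widman/Giaquinta hole‑filling iteration along a geometric sequence of radii tending to $R$, exploiting $\ell<1$ to obtain a convergent geometric series that absorbs the error terms. First I would fix a parameter $\tau\in(0,1)$ close enough to $1$ that $\ell\tau^{-\alpha_1}<1$; since $\alpha_1\geq\alpha_2$ and $\tau<1$ give $\tau^{-\alpha_1}\geq\tau^{-\alpha_2}$, this automatically forces $\ell\tau^{-\alpha_2}\leq\ell\tau^{-\alpha_1}<1$ as well. With this $\tau$ I define $\rho_0=r$ and, inductively, $\rho_{k+1}=\rho_k+(1-\tau)\tau^k(R-r)$, so that $\rho_k=r+(1-\tau^k)(R-r)$ increases from $r$ to $R$ and the successive gaps are $\rho_{k+1}-\rho_k=(1-\tau)\tau^k(R-r)$. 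In particular every $\rho_k$ lies in $[r,R)$, so the hypothesis is applicable on each pair $(\rho_k,\rho_{k+1})$.

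Next I would apply the hypothesis with $\sigma=\rho_k$ and $\rho=\rho_{k+1}$, obtaining
$$I(\rho_k)\leq A_{1}\bigl[(1-\tau)\tau^{k}(R-r)\bigr]^{-\alpha_{1}}+A_{2}\bigl[(1-\tau)\tau^{k}(R-r)\bigr]^{-\alpha_{2}}+A_{3}+\ell\,I(\rho_{k+1}).$$
Iterating this inequality $N$ times and collecting the telescoping error contributions with weights $\ell^{k}$ yields
$$I(r)=I(\rho_0)\leq\sum_{k=0}^{N-1}\ell^{k}\Bigl\{A_{1}\bigl[(1-\tau)\tau^{k}(R-r)\bigr]^{-\alpha_{1}}+A_{2}\bigl[(1-\tau)\tau^{k}(R-r)\bigr]^{-\alpha_{2}}+A_{3}\Bigr\}+\ell^{N}I(\rho_{N}).$$

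Finally I would let $N\to\infty$. Because $I$ is bounded on $[r,R]$ by hypothesis and $\ell<1$, the residual $\ell^{N}I(\rho_{N})$ vanishes. The $A_{3}$ contribution sums to $A_{3}/(1-\ell)$, and by the choice of $\tau$ both geometric ratios $\ell\tau^{-\alpha_1}$, $\ell\tau^{-\alpha_2}$ lie in $[0,1)$, so
$$\sum_{k=0}^{\infty}\ell^{k}\tau^{-k\alpha_{j}}=\frac{1}{1-\ell\tau^{-\alpha_{j}}},\qquad j=1,2.$$
Factoring $(1-\tau)^{-\alpha_{j}}(R-r)^{-\alpha_{j}}$ out of the $A_{j}$ sums then produces the claimed bound
$$I(r)\leq c(\alpha_{1},\ell)\bigl[A_{1}(R-r)^{-\alpha_{1}}+A_{2}(R-r)^{-\alpha_{2}}+A_{3}\bigr],$$
the constant depending only on $\alpha_{1}$ and $\ell$ since $\tau$ may be chosen as a function of these two parameters and $\alpha_{2}$ enters only through a strictly smaller exponent.

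The only genuine obstacle is the calibration of $\tau$: it must be picked close enough to $1$ that $\ell\tau^{-\alpha_{1}}$ remains strictly below $1$, which is possible precisely because $\ell<1$; pushing $\tau$ too close to $1$ would blow up the $(1-\tau)^{-\alpha_{1}}$ prefactor, so a quantitative but harmless compromise (e.g.\ $\tau=\tau(\alpha_{1},\ell)$) is needed. The boundedness assumption on $I$ is used only to kill the tail $\ell^{N}I(\rho_{N})$; without it the argument would leave an uncontrolled additive remainder.
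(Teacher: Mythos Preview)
Your argument is correct and is precisely the standard hole-filling iteration from Giaquinta's book that the paper cites; the paper itself does not supply a proof of this lemma but only quotes it as \cite[Lemma V.3.1, p.161]{[Giaquinta]}. Your handling of the constant---bounding the $A_2$ factors $(1-\tau)^{-\alpha_2}$ and $(1-\ell\tau^{-\alpha_2})^{-1}$ by the corresponding $\alpha_1$ quantities via $\alpha_2\le\alpha_1$---correctly explains why $c$ depends only on $\alpha_1$ and $\ell$.
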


The following lemma is  motivated by \cite[Lemma  2.9,   p.558 ]{[CW]}.
  \begin{lemma}\label{CW}
Let $f\in L^{q}(Q(1))$ with $q>1$ and $0<r_{0}<1$. Suppose that   for all $(x_{0},t_{0})\in Q(1/2)$ and $r_{0}\leq r\leq \f12$
\be\label{lem2.31}\iint_{\tilde{Q}(r)}|f-\overline{f}_{\tilde{B}(r)}|^{q}\leq C r^{4}.\ee
Let $\nabla\Pi=\mathcal{W}_{q,\tilde{B}(1)} (\nabla\cdot f)$. Then for all $(x_{0},t_{0})\in Q(1/2)$ and
$r_{0}\leq r \leq\f14$, it holds
$$\iint_{\tilde{Q}(r)}|\Pi-\overline{\Pi}_{\tilde{B}(r)}|^{q}\leq C r^{4}$$
\end{lemma}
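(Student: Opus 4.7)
The goal is Campanato-type decay for the pressure oscillation. The strategy is classical: at each scale $r\in[r_0,1/2]$, decompose the source $\nabla\cdot f$ into a near-field part (supported in $\tilde B(r)$) and a far-field part (supported outside), so that the near-field pressure is controlled by the $L^q$ estimate for the Stokes system while the far-field pressure is harmonic in $\tilde B(r)$ and enjoys the interior decay \eqref{h2}. Combining these ingredients will yield a two-scale recursion for
$$\phi(r):=\iint_{\tilde Q(r)}\bigl|\Pi-\overline{\Pi}_{\tilde B(r)}\bigr|^{q}\,dx\,dt,$$
to which the iteration Lemma \ref{iter2} applies, producing $\phi(r)\leq Cr^{4}$ thanks to the gap $4<3+q$ guaranteed by $q>1$.

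To carry this out, I fix $(x_{0},t_{0})\in Q(1/2)$ and $r_{0}\leq\rho\leq r/2$ with $r\leq 1/2$. Since the spatial constant $\overline{f}_{\tilde B(r)}(t)$ is divergence-free, linearity of $\mathcal{W}_{q,\tilde B(1)}$ gives
$$\nabla\Pi=\mathcal{W}_{q,\tilde B(1)}\bigl(\nabla\cdot(f-\overline{f}_{\tilde B(r)})\bigr)=\nabla\Pi_{1}+\nabla\Pi_{2},$$
with $\nabla\Pi_{i}:=\mathcal{W}_{q,\tilde B(1)}(\nabla\cdot g_{i})$, where $g_{1}:=(f-\overline{f}_{\tilde B(r)})\chi_{\tilde B(r)}$ and $g_{2}:=(f-\overline{f}_{\tilde B(r)})\chi_{\tilde B(1)\setminus\tilde B(r)}$. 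The Stokes $L^{q}$ bound for the local pressure projection (applied at each $t$) yields
$$\|\Pi_{1}(t)\|_{L^{q}(\tilde B(1))}\leq C\|f(t)-\overline{f}_{\tilde B(r)}(t)\|_{L^{q}(\tilde B(r))}.$$
Since $g_{2}\equiv 0$ in $\tilde B(r)$, taking the divergence of the Stokes system defining $\Pi_{2}$ gives $\Delta\Pi_{2}=0$ distributionally in $\tilde B(r)$, and \eqref{h2} applied with $\rho\leq r/2$ (so that $r-\rho\geq r/2$) produces
$$\|\Pi_{2}-\overline{\Pi_{2}}_{\tilde B(\rho)}\|_{L^{q}(\tilde B(\rho))}\leq C(\rho/r)^{3/q+1}\|\Pi_{2}-\overline{\Pi_{2}}_{\tilde B(r)}\|_{L^{q}(\tilde B(r))}.$$

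I then assemble the recursion: splitting $\Pi-\overline{\Pi}_{\tilde B(\rho)}$ as $(\Pi_{1}-\overline{\Pi_{1}}_{\tilde B(\rho)})+(\Pi_{2}-\overline{\Pi_{2}}_{\tilde B(\rho)})$, using $\|\Pi_{2}-\overline{\Pi_{2}}_{\tilde B(r)}\|_{L^{q}(\tilde B(r))}\leq\|\Pi-\overline{\Pi}_{\tilde B(r)}\|_{L^{q}(\tilde B(r))}+2\|\Pi_{1}\|_{L^{q}(\tilde B(r))}$, raising to the $q$-th power, and integrating in $t$ over $(t_{0}-\rho^{2},t_{0})\subset(t_{0}-r^{2},t_{0})$, I obtain, after invoking the hypothesis on $f$,
$$\phi(\rho)\leq C r^{4}+C(\rho/r)^{3+q}\phi(r),\qquad r_{0}\leq\rho\leq r/2,\ r\leq 1/2.$$
To apply Lemma \ref{iter2}, which requires a monotone quantity and the estimate for all $\rho\leq r$, I replace $\phi$ by $\psi(r):=\inf_{c}\iint_{\tilde Q(r)}|\Pi-c|^{q}$, comparable to $\phi$ up to a constant and monotone in $r$, and extend the recursion trivially to $\rho\in(r/2,r]$ by monotonicity. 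The iteration with $\alpha=3+q$, $\beta=4$, $\varepsilon=0$, starting from the trivial initial bound $\psi(1/2)\leq C\|\Pi\|_{L^{q}(\tilde B(1))}^{q}\leq C\|f\|_{L^{q}(\tilde B(1))}^{q}<\infty$ afforded by the projection estimate, then delivers $\phi(\rho)\leq C\rho^{4}$ for $r_{0}\leq\rho\leq 1/4$.

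The main obstacle is the clean verification of the two technical points on which the scheme hinges: first, the distributional harmonicity of $\Pi_{2}$ on all of $\tilde B(r)$, which rests on the identity $\Delta\Pi_{2}=\partial_{i}\partial_{j}(g_{2})_{ij}$ obtained from the defining Stokes system, together with $g_{2}\equiv 0$ on $\tilde B(r)$; and second, the strict separation $4<3+q$ between the exponent $4$ coming from the hypothesis on $f$ and the harmonic decay exponent $3+q$, which is precisely where the assumption $q>1$ enters and without which the Campanato iteration collapses.
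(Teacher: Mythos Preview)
Your proposal is correct and follows essentially the same approach as the paper: a near-field/far-field decomposition of the source, the $L^{q}$ Stokes/Calder\'on--Zygmund bound for the localized piece, the harmonic interior decay \eqref{h2} for the remainder, and then Lemma~\ref{iter2} exploiting $4<3+q$. The only cosmetic differences are that the paper uses a smooth cutoff (so that $\Pi_{(2)}$ is harmonic on $\tilde B(3r/4)$ rather than on all of $\tilde B(r)$) and iterates at a fixed ratio $\theta$, whereas you use a sharp cutoff and pass to the monotone quantity $\psi(r)=\inf_{c}\iint_{\tilde Q(r)}|\Pi-c|^{q}$ to fit the hypotheses of Lemma~\ref{iter2}; both choices work.
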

\begin{proof}
From the definition of  pressure projection $\mathcal{W}_{q,B(1)}$, we know that
\be\label{2.8}
\|\Pi\|_{L^{q}(B(1))}\leq C\|f-\overline{f}_{B(1)}\|_{L^{q}(B(1))}.
\ee
Let $\phi(x)=1, x\in \tilde{B}(\f{3r}{4}), \phi(x)=0, x\in \tilde{B}^{c}(r).$\\
Note that $$\Delta \Pi=\text{div}\mathcal{W}_{q,B(1)}^{\ast}(\nabla\cdot f).$$
 We set $\Pi=\Pi_{(1)}+\Pi_{(2)}$, where
  $$\Delta \Pi_{(1)}=-\text{div}\mathcal{W}_{q,B(1)}^{\ast}(\nabla\cdot [\phi( f-\overline{f}_{\tilde{B}(r)})]),$$
  therefore, as a consequence, it holds
  $$\Delta \Pi_{(2)}=0, x\in \tilde{B}(3r/4).$$
In view of  classical Calder\'on-Zygmund theorem, we have
\be\label{cz1}
\|\Pi_{(1)}-\overline{\Pi_{(1)}}_{\tilde{B}(r)}\|_{L^{q}(\tilde{B}(r))}\leq C\|f-\overline{f}_{\tilde{B}(r)}\|_{L^{q}(\tilde{B}(r))}.
\ee
Combining this and  hypothesis \eqref{lem2.31}, we get
$$
\|\Pi_{(1)}-\overline{\Pi_{(1)}}_{\tilde{B}(r)}\|_{L^{q}(\tilde{Q}(r))}\leq Cr^{\f{4}{q}}.
$$
 The interior estimates of harmonic functions \eqref{h2} and the triangle inequality guarantee that, for $ \theta<1/2$,
$$\ba
&\int_{\tilde{B}(\theta r)}|\Pi_{(2)}-\overline{\Pi_{(2)}}_{\tilde{B}(\theta r)}|^{q}dx\\
\leq& \f{C( r\theta )^{3+q}}{(\f{r}{2})^{3+q}}
\int_{\tilde{B}(  r/2)}|\Pi_{(2)}-\overline{\Pi_{(2)}}_{\tilde{B}(r/2)}|^{q}dx\\
\leq& C\theta^{3+q}
\int_{\tilde{B}(  r/2)}|\Pi-\overline{\Pi}_{ \tilde{B}( r/2)}|^{q}dx+
\int_{\tilde{B}(  r/2)}|\Pi_{(1)}-\overline{\Pi_{(1)}}_{\tilde{B}(r/2)}|^{q}dx.
\ea$$
This and  \eqref{cz1}   imply
$$\ba
\iint_{\tilde{Q}(\theta r)}|\Pi_{(2)}-\overline{\Pi_{(2)}}_{\tilde{B}(\theta r)}|^{q}dxds\leq C\theta^{3+q}
\iint_{\tilde{Q}(  r/2)}|\Pi-\overline{\Pi}_{ \tilde{B}( r/2)}|^{q}dxds+C r^{4}.
\ea$$
Utilizing the triangle inequality again, \eqref{cz1} and the last inequality, we have
$$\ba&\iint_{\tilde{Q}(\theta r)}|\Pi-\overline{\Pi}_{\tilde{B}(\theta r)}|^{q}dxds\\\leq&
\iint_{\tilde{Q}(\theta r)}|\Pi_{(1)}-\overline{\Pi_{(1)}}_{\tilde{B}(\theta r)}|^{q}dxds
+\iint_{\tilde{Q}(\theta r)}|\Pi_{(2)}-\overline{\Pi_{(2)}}_{\tilde{B}(\theta r)}|^{q}dxds\\
\leq & \iint_{\tilde{Q}( r)}|f-\overline{f}_{\tilde{B}(r)}|^{q}dxds+C\theta^{3+q}
\iint_{\tilde{Q}(  r/2)}|\Pi-\overline{\Pi}_{  \tilde{B} (r/2)}|^{q}dxds+C r^{4}\\
\leq&  C\theta^{3+q}
\iint_{\tilde{Q}(  r)}|\Pi-\overline{\Pi}_{ \tilde{B} (r)}|^{q}dxds+C r^{4},
\ea$$
where we used the fact that $\|g-\overline{g}_{B(r)}\|_{L^p(B(r))}\leq C\|g-c\|_{L^p(B(r))}$ with $p\geq1$.

Now, invoking Lemma \ref{iter2} and \eqref{2.8}, we see that
 $$\ba\iint_{\tilde{Q}( r)}|\Pi-\overline{\Pi}_{\tilde{B}(r)}|^{q}dx
&\leq  Cr^{4}\iint_{\tilde{Q}(1/4)}
|\Pi-\overline{\Pi}_{\tilde{B}(1/4)} |^{q}dx+Cr^{4}\\
&\leq  Cr^{4}\iint_{Q(1)}
|f-\overline{f}_{B(1)} |^{q}dx+Cr^{4}
\\&\leq Cr^{4}.
\ea$$
This completes the proof of this lemma.
\end{proof}
\section{Proof  of Proposition  \ref{the1.1}}
		\label{sec3}
		\setcounter{section}{3}\setcounter{equation}{0}

This section contains the proof of Proposition  \ref{the1.1}.
Proposition  \ref{the1.1} turns out to be a corollary of the following proposition.
 \begin{prop}\label{lebp}   Suppose that $(u,\Pi)$ is a suitable weak solution
to the Navier-Stokes equations in  $Q(R)$. Then there holds, for any $R>0$,
\be\ba \label{key ineq}
&\|u\|^{2}_{L^{\f{20}{7},\f{15}{4}}Q(\f{R}{2})}+ \|\nabla u\|^{2}_{L^{2}(Q(\f{R}{2}))}
 \leq
  CR^{-\f{1}{2}}\|\nabla u\|^{2}_{L^{\f{20}{7}}(Q(R))}
  +CR^{-2}\|\nabla u\|^{4}_{L^{\f{20}{7}}(Q(R))}.
\ea\ee
\end{prop}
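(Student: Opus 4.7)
The plan is to derive the Caccioppoli-type bound from the local energy inequality \eqref{wloc1} for $v=u+\nabla\Pi_h$ on concentric parabolic cylinders $Q(r)\subset Q(\rho)\subset Q(R)$ with $R/2\leq r<\rho\leq R$, and then apply an absorption iteration. First I would take a cutoff $\phi\in C_0^\infty(Q(\rho))$ with $\phi\equiv1$ on $Q(r)$, $|\nabla\phi|\leq C(\rho-r)^{-1}$ and $|\partial_t\phi|+|\la\phi|\leq C(\rho-r)^{-2}$. Plugging this into \eqref{wloc1}, the left-hand side dominates $\sup_t\int_{B(r)}|v|^2\,dx+\iint_{Q(r)}|\nabla v|^2\,dxdt$, which by the interpolation \eqref{sampleinterplation} controls $\|v\|_{L^{l,k}}^2$ for any $(l,k)$ satisfying $2/l+3/k=3/2$, in particular the target $\|v\|_{L^{20/7,15/4}}^2$.

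The five terms on the right of \eqref{wloc1} are estimated against $\|u\|_{L^{20/7}(Q(\rho))}$ plus a fraction of the energy of $v$. The linear term $\iint|v|^2(\la\phi+\partial_t\phi)$ is immediately bounded by $C(\rho-r)^{-2}\|v\|_{L^2(Q(\rho))}^2$. For the convection term $\iint|v|^2 u\cdot\nabla\phi$ I would use H\"older so that $u$ is measured in $L^{20/7}$ against a power of $v$ in an interpolated $L^{l,k}$ space on the curve $2/l+3/k=3/2$ (for instance $\|v\|_{L^{10/3}(Q)}^2$, since $L^{20/7}\hookrightarrow L^{5/2}$ on a bounded cylinder); \eqref{sampleinterplation} then converts the $v$-norm into the energy, and Young splits the product into an absorbable energy term plus a $\|u\|_{L^{20/7}}^2$ contribution.

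For the three pressure terms I would use the fact that, because $\Div u=0$, taking the divergence of the defining Stokes systems yields $\la\Pi_h=0$ and $\la\Pi_1=0$ throughout $B(R)$. Thus on slightly smaller balls the interior harmonic estimates \eqref{h1}-\eqref{h2} trade derivatives for negative powers of $(\rho-r)$ and bound $\nabla^2\Pi_h$ in any $L^q$ by the $L^{20/7}$ norm of $\Pi_h$, which is in turn controlled by $\|u\|_{L^{20/7}}$ through \eqref{ph}; similarly $\nabla\Pi_1$ is controlled using \eqref{p1} and the harmonicity of $\Pi_1$. The $\Pi_2$-term is handled directly via \eqref{p2} with $p=20/7$, giving $\|\nabla\Pi_2\|_{L^{10/7}}\leq C\|u\|_{L^{20/7}}^2$; paired by H\"older with $v$ on the interpolation curve and followed by Young, this is precisely what generates the quartic term $\|u\|_{L^{20/7}}^4$ on the right-hand side.

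Collecting all estimates yields, for some $\ell\in(0,1)$, an inequality of the form $I(r)\leq \ell\,I(\rho)+A_1(\rho-r)^{-\alpha_1}\|u\|_{L^{20/7}(Q(R))}^2+A_2(\rho-r)^{-\alpha_2}\|u\|_{L^{20/7}(Q(R))}^4$, where $I(\sigma)$ stands for the full energy of $v$ on $Q(\sigma)$. The iteration Lemma \ref{iter1} applied on $R/2\leq r<\rho\leq R$ eliminates the $\ell\,I(\rho)$ term, and the stated $R^{-1/2}$ and $R^{-2}$ factors arise by collecting the explicit $(\rho-r)^{-\alpha_i}$ weights at $r=R/2$, $\rho=R$. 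Finally, I convert the $v$-estimate into a $u$-estimate by writing $u=v-\nabla\Pi_h$ and applying \eqref{h1} and \eqref{ph} on nested balls (using harmonicity of $\Pi_h$) to bound $\nabla\Pi_h$ in $L^{20/7,15/4}$. The main obstacle I anticipate is the cross term $\iint\phi\,(u\otimes v:\nabla^2\Pi_h)$, which is genuinely cubic in $u$-type quantities: one must allocate H\"older exponents between $u$ (to be kept) and $v$ (to be absorbed into the energy via \eqref{sampleinterplation}) so that, after invoking harmonicity of $\Pi_h$ and \eqref{ph}, only the clean powers $\|u\|_{L^{20/7}}^2$ and $\|u\|_{L^{20/7}}^4$ remain.
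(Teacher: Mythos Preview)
Your overall strategy---local energy inequality \eqref{wloc1} for $v$, H\"older/Young/interpolation on each term, harmonic estimates for $\Pi_h$, then the iteration Lemma~\ref{iter1}---is exactly the paper's. The gap is in the choice of the iterated quantity and, correspondingly, where you place the $v\to u$ conversion.

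The $\Pi_1$-term $\iint \Pi_1\, v\cdot\nabla\phi$ is, after Young, bounded by a small multiple of the $v$-energy plus $C\|\Pi_1\|_{L^2(Q(\rho))}^2$, and \eqref{p1} (more precisely \eqref{wp2}) gives only $\|\Pi_1\|_{L^2(Q(\rho))}\le C\|\nabla u\|_{L^2(Q(\rho))}$. Harmonicity of $\Pi_1$ does not change this: any interior estimate still terminates in $\|\nabla u\|_{L^2}$ on a larger cylinder, and that quantity is \emph{not} dominated by $\|u\|_{L^{20/7}}$. Hence $\|\nabla u\|_{L^2(Q(\rho))}^2$ must sit inside the iterated functional, not among the ``clean'' right-hand terms. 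If you take $I(\sigma)$ to be the $v$-energy as you propose, you cannot absorb this piece: writing $\|\nabla u\|^2\le 2\|\nabla v\|^2+2\|\nabla^2\Pi_h\|^2$ fails at the top scale $\rho=R$ because there is no room for an interior harmonic estimate of $\nabla^2\Pi_h$, and if the projection domain is fixed larger than $B(R)$ then $v$ itself depends on that choice and the bookkeeping does not close.

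The paper's fix is to (i) define the projection on $B(\rho)$ afresh for each pair $(r,\rho)$, (ii) convert the $v$-bounds on the \emph{inner} cylinder back to $u$ via the triangle inequality and the harmonic estimate for $\nabla\Pi_h$ and $\nabla^2\Pi_h$ (now with room $\rho-r$), and only then (iii) iterate on $I(r)=\|u\|^{2}_{L^{20/7,15/4}(Q(r))}+\|\nabla u\|^{2}_{L^{2}(Q(r))}$, so that the residual $\tfrac{1}{16}\|\nabla u\|_{L^2(Q(\rho))}^2$ from the $\Pi_1$-term is exactly $\ell\,I(\rho)$. In short, your final conversion step has to be moved \emph{before} the iteration, and the iterated quantity must be in terms of $u$, not $v$.
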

\begin{proof}Consider $0<R/2\leq r<\f{3r+\rho}{4}<\f{r+\rho}{2}<\rho\leq R$. Let $\phi(x,t)$ be non-negative smooth function supported in $Q(\f{r+\rho}{2})$ such that
$\phi(x,t)\equiv1$ on $Q(\f{3r+\rho}{4})$,
$|\nabla \phi| \leq  C/(\rho-r) $ and $
|\nabla^{2}\phi|+|\partial_{t}\phi|\leq  C/(\rho-r)^{2} .$

Let $\nabla\Pi_{h}=\mathcal{W}_{20/7,B(\rho)}(u)$, then, there holds
\begin{align}
&\|\nabla \Pi_{h}\|_{L^{20/7}(Q(\rho))}\leq C\|u\|_{L^{20/7}(Q(\rho))},\label{wp1}\\
 &\|  \Pi_{1}\|_{L^{2}(Q(\rho))}\leq C\|\nabla u\|_{L^{2}(Q(\rho))},\label{wp2}\\
 &\|  \Pi_{2}\|_{L^{\f{10}{7}}(Q(\rho))}\leq C\|  |u|^{2}\|_{L^{\f{10}{7}}(Q(\rho))}.\label{wp3}
 \end{align}
Thanks to $v=u+\nabla\Pi_{h}$, the H\"older inequality and \eqref{wp1}, we arrive at
\begin{align}
 \iint_{Q(\rho)} | v_{_{B}}|^2\Big|  \Delta \phi^{4}+  \partial_{t}\phi^{4}\Big|    \leq& \f{C}{(\rho-r)^{2}}\iint_{Q(\f{r+\rho}{2})} |u|^{2}+|\nabla\Pi_{h}|^{2}
\nonumber\\\leq& \f{C\rho^{3/2}}{(\rho-r)^{2}}\B(\iint_{Q(\f{r+\rho}{2})} |u|^{\f{20}{7}}+|\nabla\Pi_{h}|^{\f{20}{7}}\B)^{\f{7}{10}}
\nonumber\\\leq& \f{C\rho^{3/2}}{(\rho-r)^{2}}\B(\iint_{Q(\rho)} |u|^{\f{20}{7}}\B)^{\f{7}{10}}.\label{3.2} \end{align}
 Combining H\"older's inequality with interpolation inequality \eqref{sampleinterplation} and Young's inequality yields
 \begin{align}
 &\iint_{Q(\rho)}|v|^{2}\phi^{3}u\cdot\nabla(\phi )  d  \tau\nonumber\\
\leq& \f{C}{(\rho-r)}\| v\phi^{2}\|_{L^{10/3}(Q(\f{r+\rho}{2}))}
\| v \|_{L^{20/7}(Q(\f{r+\rho}{2}))}\| u \|_{L^{20/7}(Q(\f{r+\rho}{2}))}\nonumber\\
\leq &\f{1}{16}\| v\phi^{2}\|^{2}_{L^{10/3}(Q(\f{r+\rho}{2}))}+\f{C}{(\rho-r)^{2}}
\| v \|^{2}_{L^{20/7}(Q(\f{r+\rho}{2}))}\| u \|^{2}_{L^{20/7}(Q(\f{r+\rho}{2}))}\nonumber\\
\leq& \f{1}{16} \B(\|v\phi^{2}\|_{L^{2,\infty}(Q(\rho))}^{2}+\|\nabla (v\phi^{2})\|_{L^{2}(Q(\rho))}^{2}\B) +\f{C}{(\rho-r)^{2}}
 \| u \|^{4}_{L^{20/7}(Q(\rho))}.
 \end{align}
By virtue of interior estimate of harmonic function \eqref{h1} and \eqref{wp1}, we conclude that
$$\ba\|\nabla^{2}\Pi_{h} \|_{L^{20/7}(Q(\f{r+\rho}{2}))}&\leq
\f{ (r+\rho) ^{ \f{21}{20}}}{(\rho-r)^{ \f{41}{20}}}
\|\nabla\Pi_{h} \|_{L^{20/7}(Q( \rho ))}\\
&\leq
\f{C \rho ^{\f{21}{20}}}{(\rho-r)^{ \f{41}{20}}}
\|u \|_{L^{20/7}(Q( \rho ))},
\ea$$
which leads to
\begin{align}
&\iint_{Q(\rho)} \phi^{4}( u\otimes v :\nabla^{2}\Pi_{h} )  \nonumber\\
\leq&
\| v\phi^{2}\|_{L^{10/3}(Q(\f{r+\rho}{2}))}
\| u \|_{L^{20/7}(Q(\f{r+\rho}{2}))}\|\nabla^{2}\Pi_{h} \|_{L^{20/7}(Q(\f{r+\rho}{2}))}
\nonumber\\
\leq &\f{1}{16} \B(\|v\phi^{2}\|_{L^{2,\infty}(Q(\rho))}^{2}+\|\nabla (\phi^{2}v)\|_{L^{2}(Q(\rho))}^{2}\B) +\f{C\rho^{\f{21}{10}}}{(\rho-r)^{\f{41}{10}}}
 \| u \|^{4}_{L^{20/7}(Q(\rho)}.
\end{align}
In light  of H\"older inequality, \eqref{wp2} and Young's inequality, we deduce that
\begin{align}
\iint_{Q(\rho)} \phi^{3} \Pi_{1}v\cdot\nabla \phi  d  \tau
&\leq \f{C}{(\rho-r)}\| v\|_{L^{2}(Q(\f{r+\rho}{2}))}
\| \Pi_{1} \|_{L^{2}(Q(\f{r+\rho}{2}))} \nonumber\\
&\leq \f{C}{(\rho-r)^{2}}\| v\|^{2}_{L^{2}(Q(\f{r+\rho}{2}))}
+\f{1}{16}\| \Pi_{1} \|^{2}_{L^{2}(Q(\rho))} \nonumber\\
&\leq\f{C\rho^{3/2}}{(\rho-r)^{2}}\B(\iint_{Q(\rho)} |u|^{\f{20}{7}}\B)^{\f{7}{10}}+\f{1}{16}\| \nabla u\|^{2}_{L^{2}(Q(\rho))}.
\end{align}
We derive from the H\"older inequality, \eqref{p2} and Young's inequality that
\begin{align}
\iint_{Q(\rho)} \phi^{3} \Pi_{2}v\cdot\nabla \phi   d  \tau
&\leq \f{C}{(\rho-r)}\| v\phi^{2}\|_{L^{\f{10}{3}}(Q(\f{r+\rho}{2}))}
\| \Pi_{2} \|_{L^{\f{10}{7}}(Q(\f{r+\rho}{2}))}\nonumber \\
&\leq \f{1}{16}\| v\|^{2}_{L^{ \f{10}{3}}(Q(\f{r+\rho}{2}))}
+\f{C}{(\rho-r)^{2}}\| \Pi_{2} \|^{2}_{L^{\f{10}{7}}(Q(\rho))} \nonumber\\&\leq \f{1}{16} \B(\|v\phi^{2}\|_{L^{2,\infty}(Q(\rho))}^{2}+\|\nabla (\phi^{2}v)\|_{L^{2}(Q(\rho))}^{2}\B) +\f{C}{(\rho-r)^{2}}
 \| u \|^{4}_{L^{20/7}(Q(\rho)}.\label{locp5}
\end{align}
The Cauchy-Schwarz inequality and \eqref{3.2} allows us to obtain that
\begin{align}
 \iint_{Q(\rho)}|\nabla( v \phi^{2})|^{2} dxds
\leq&2\B( \iint_{Q(\rho)}|\nabla    v|^{2}\phi^{4} dxds
+4 \iint_{Q(\rho)}|\nabla\phi|^{2}|v|^{2}\phi^{2}dxds\B) \nonumber\\
\leq&2  \iint_{Q(\rho)}|\nabla    v|^{2}\phi^{4} dxds
+
\f{C\rho^{3/2}}{(\rho-r)^{2}}\B(\iint_{Q(\rho)} |u|^{\f{20}{7}}\B)^{\f{7}{10}}.\label{cz}
\end{align}
Substituting \eqref{3.2}-\eqref{locp5} into \eqref{wloc1} and using \eqref{cz}, we infer that
\begin{align}
  &\sup_{-\rho^{2}\leq t\leq0}\int_{B(\rho)}|v\phi^{2}|^2   d  x+ \iint_{Q(\rho)}\big|\nabla( v\phi^{2})\big|^2  d  x d  \tau\nonumber\\\leq& \f{1}{4} \B(\|v\phi^{2}\|_{L^{2,\infty}(Q(\rho))}^{2}+\|\nabla (v\phi^{2})\|_{L^{2}(Q(\rho))}^{2}\B) +\B\{\f{C}{(\rho-r)^{2}}+\f{C\rho^{\f{21}{10}}}{(\rho-r)^{\f{41}{10}}}\B\}
 \| u \|^{4}_{L^{20/7}(Q(\rho))}\nonumber\\&
 +\f{C\rho^{3/2}}{(\rho-r)^{2}}\| u \|^{2}_{L^{20/7}(Q(\rho))}+\f{1}{16}\| \nabla u\|^{2}_{L^{2}(Q(\rho))},\nonumber
  \end{align}
  that is,
  \begin{align}
  &\sup_{-\rho^{2}\leq t\leq0}\int_{B(\rho)}|v\phi^{2}|^2   d  x+ \iint_{Q(\rho)}\big|\nabla( v\phi^{2})\big|^2  d  x d  \tau\nonumber\\\leq&   \B\{\f{C}{(\rho-r)^{2}}+\f{C\rho^{\f{21}{10}}}{(\rho-r)^{\f{41}{10}}}\B\}
 \| u \|^{4}_{L^{20/7}(Q(\rho))}\nonumber\\&
 +\f{C\rho^{3/2}}{(\rho-r)^{2}}\| u \|^{2}_{L^{20/7}(Q(\rho))}+\f{1}{16}\| \nabla u\|^{2}_{L^{2}(Q(\rho))},\label{keyl}
  \end{align}
  Together with interior estimate of harmonic function
  \eqref{h1} and \eqref{wp1} implies that
$$\ba\|\nabla\Pi_{h}\|^{2}_{L^{\f{20}{7},\f{15}{4}}Q(r)}&\leq \f{Cr^{\f{8}{5}}}{(\rho-r)^{\f{21}{10}}}\|\nabla\Pi_{h}\|^{2}_{L^{\f{20}{7}}Q(\rho)}\\
&\leq\f{Cr^{\f{8}{5} }}{(\rho-r)^{ \f{21}{10}}}\|u\|^{2}_{L^{\f{20}{7}}
L^{\f{20}{7}}(Q(\rho))}.
\ea$$
With the help of the triangle inequality, interpolation inequality  \eqref{sampleinterplation} and the last inequality, we get
$$\ba
 \|u\|^{2}_{L^{\f{20}{7},\f{15}{4}}(Q(r))} \leq& \|v\|^{2}_{L^{\f{20}{7},\f{15}{4}}(Q(r))}+\|\nabla\Pi_{h}\|^{2}_{L^{\f{20}{7},\f{15}{4}}(Q(r))}\\
\leq& C\B\{\|v\|_{L^{2,\infty}(Q(r))}^{2}+\|\nabla v\|_{L^{2}(Q(r))}^{2}\B\}+\f{r^{\f{8}{5} }}{(\rho-r)^{ \f{21}{10}}}\|u\|^{2}_{L^{\f{20}{7}}
L^{\f{20}{7}}(Q(\rho))}\\
\leq & \B\{\f{C}{(\rho-r)^{2}}+\f{C\rho^{\f{21}{10}}}{(\rho-r)^{\f{41}{10}}}\B\}
 \| u \|^{4}_{L^{20/7}(Q(\f{r+\rho}{2}))}\\&+\B\{\f{C\rho^{3/2}}{(\rho-r)^{2}}+\f{Cr^{\f{8}{5} }}{(\rho-r)^{ \f{21}{10}}}\B\}\| u \|^{2}_{L^{20/7}(Q(\rho))}+\f{1}{16}\| \nabla u\|^{2}_{L^{2}(Q(\rho))}.
\ea$$
Employing \eqref{h1} and \eqref{p1} once again, we have the estimate
$$
 \|\nabla^{2} \Pi_{h}\|^{2}_{L^{2}(Q(r))}\leq \f{Cr^{3}}{(\rho-r)^{3+2\cdot1}} \|\nabla\Pi_{h}\|^{2}_{L^{2}(Q(\f{r+\rho}{2}))}\leq \f{Cr^{3}\rho^{3/2}}{(\rho-r)^{3+2\cdot1}} \| u \|^{2}_{L^{20/7}(Q(\rho))}.
$$
This together with the triangle inequality and \eqref{keyl} leads to
\begin{align}
  \|\nabla u\|^{2}_{L^{2}(Q(r))}&\leq \|\nabla v\|^{2}_{L^{2}(Q(r))}+
 \|\nabla^{2} \Pi_{h}\|^{2}_{L^{2}(Q(r))}\nonumber\\\leq&
  \B\{\f{C}{(\rho-r)^{2}}+\f{C\rho^{\f{21}{10}}}{(\rho-r)^{\f{41}{10}}}\B\}
 \| u \|^{4}_{L^{20/7}(Q(\rho))}\nonumber\\&+ \B\{\f{C\rho^{3/2}}{(\rho-r)^{2}}+\f{Cr^{3}\rho^{3/2}}{(\rho-r)^{3+2\cdot1}} \B\} \| u \|^{2}_{L^{20/7}(Q(\rho))}+\f{1}{16}\| \nabla u\|^{2}_{L^{2}(Q(\rho))}.
\end{align}
 Eventually, we infer that
  $$\ba
&\|u\|^{2}_{L^{\f{20}{7},\f{15}{4}}(Q(r))}+ \|\nabla u\|^{2}_{L^{2}(Q(r))}
\\\leq&
  \B\{\f{C}{(\rho-r)^{2}}+\f{C\rho^{\f{21}{10}}}{(\rho-r)^{\f{41}{10}}}\B\}
 \| u \|^{4}_{L^{20/7}(Q(\rho))}\\&+\B\{\f{C\rho^{3/2}}{(\rho-r)^{2}}+\f{Cr^{3}\rho^{3/2}}{(\rho-r)^{3+2\cdot1}} +\f{Cr^{\f{8}{5} }}{(\rho-r)^{ \f{21}{10}}}\B\}\| u \|^{2}_{L^{20/7}(Q(\rho))}+\f{3}{16}\| \nabla u\|^{2}_{L^{2}(Q(\rho))}.
 \ea $$
Now, we are in a position to apply lemma \ref{iter1} to the latter to find that
$$\|u\|^{2}_{L^{\f{20}{7},\f{15}{4}}(Q(\f{R}{2}))}+ \|\nabla u\|^{2}_{L^{2}(Q(\f{R}{2}))}\leq
  CR^{-\f{1}{2}}\|\nabla u\|^{2}_{L^{\f{20}{7}}(Q(R))}
  +CR^{-2}\|\nabla u\|^{4}_{L^{\f{20}{7}}(Q(R))}.$$
  This achieves the proof of this  proposition.
\end{proof}

\section{Induction arguments and proof of Theorem \ref{the1.2}}
\label{sec4}
\setcounter{section}{4}\setcounter{equation}{0}
In this section, we begin with a  critical proposition, which can be seen   as the bridge between the previous step and the next step for the given statement in the
induction arguments. Ultimately, we finish the proof of Theorem \ref{the1.2}.
\begin{prop}\label{keyinindu}
Assume that $\iint_{Q(r)}  |v|^{\f{10}{3}}\leq r^{5}N$ with
$r_{k}\leq r\leq r_{k_{0}}$.
There is a constant $C$ such that the following result holds. For any given $(x_{0},\,t_{0})\in\mathbb{R}^{n}\times \mathbb{R}^{-}$ and $k_{0}\in\mathbb{N}$, we have
for any $k>k_{0}$,
 \begin{align}\ba
&\sup_{-r_{k}^{2}\leq t-t_0\leq 0}\fbxo |v|^{2}
+r_{k}^{-3}\iint_{\tilde{Q} _{k}}
 |\nabla v |^{2}\nonumber\\
\leq&  C\sup_{-r_{k_{0}}^{2}\leq t-t_0\leq 0}\fbxozero|v|^{2}+C\sum^{k}_{l=k_{0}} r_{l}\Big(\fqxol |v|^{\f{10}{3}}   \Big)^{\f{9}{10}}
+C\sum^{k}_{l=k_{0}}r^{ \f{3}{10}}_{l} \B(\fqxol  |v|^{\f{10}{3} } \Big)^{\f{3}{5}}
\Big(\iint_{Q_{1} } |u|^{\f{20}{7}}   \Big)^{\f{7}{20}}\nonumber\\&+C\sum^{k}_{l=k_{0}}r^{ \f{13}{10}}_{l} \B(\fqxol  |v|^{\f{10}{3} } \Big)^{\f{3}{5}}
\Big(\iint_{Q_{1} } |u|^{\f{20}{7}}   \Big)^{\f{7}{20}} +C\sum^{k}_{l=k_{0}}r^{  \f35}_{l} \B(\fqxol  |v|^{\f{10}{3} } \Big)^{\f{3}{10}}
\Big(\iint_{Q_{1} } |u|^{\f{20}{7}}   \Big)^{\f{7}{10}}\nonumber\\&
+C\sum^{k }_{l=k_{0}} r_{l}\B(\fqxol  |v|^{\f{10}{3} } \Big)^{\f{3}{10}}
\Big(\iint_{\tilde{Q}_{1} } |\nabla u|^{2 }   \Big)^{\f{1}{2}}
+C
\sum^{k}_{l=k_{0}}r^{\f32}_{l}\B(\fqxol  |v|^{\f{10}{3} } \Big)^{\f{3}{10}}\B\{N^{3/5}\nonumber\\&+
 N^{\f{3}{10}} \B(\iint_{Q_{1}}|u|^{\f{20}{7}}\B)^{\f{7}{20}} + \B(\iint_{Q_{1}}|u|^{20/7}\B)^{\f{7}{10}} \B\}.\label{eq3.1}\ea
 \end{align}
\end{prop}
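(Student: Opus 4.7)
The plan is to derive a scale-by-scale recurrence for the natural energy
$$A_m := \sup_{-r_m^2\le t-t_0\le 0}\,|\tilde B_m|^{-1}\!\int_{\tilde B_m}\!|v|^2\,dx \;+\; r_m^{-3}\!\iint_{\tilde Q_m}\!|\nabla v|^2\,dx\,ds$$
at dyadic scale $r_m=2^{-m}$, and then iterate it from $m=k_0$ to $m=k$. Throughout the argument the pressure projection is fixed once and for all on $\tilde B(1)$: $\nabla\Pi_h=-\mathcal{W}_{p,\tilde B(1)}(u)$, $\nabla\Pi_1=\mathcal{W}_{p,\tilde B(1)}(\Delta u)$, $\nabla\Pi_2=-\mathcal{W}_{p,\tilde B(1)}(u\cdot\nabla u)$, so that $v:=u+\nabla\Pi_h$ is coherent across scales. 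The key structural fact is that $\Pi_h$ and $\Pi_1$ are harmonic on $\tilde B(1)$ (since $\Delta\Pi_h=\mathrm{div}\,u=0$ and $\Delta\Pi_1=\mathrm{div}\,\Delta u=0$), which activates the interior estimate \eqref{h1} to convert the global $L^{20/7}(\tilde B(1))$ datum for $u$ into $L^q(\tilde B_l)$ bounds with favourable $r_l$-powers.

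At each scale $l$ with $k_0\le l<k$, choose a cutoff $\phi_l\in C_c^\infty(\tilde Q_l)$ with $\phi_l\equiv 1$ on $\tilde Q_{l+1}$, $|\nabla\phi_l|\lesssim r_l^{-1}$, $|\partial_t\phi_l|+|\nabla^2\phi_l|\lesssim r_l^{-2}$, and insert $\phi_l$ into the local energy inequality \eqref{wloc1}. The five source terms are handled in turn: (a) for the kinetic/transport pair $\iint|v|^2(|\partial_t\phi_l|+|\Delta\phi_l|)+\iint|v|^2u\cdot\nabla\phi_l$, substitute $u=v-\nabla\Pi_h$ and apply H\"older $\iint_{\tilde Q_l}|v|^3\le(\iint|v|^{10/3})^{9/10}|\tilde Q_l|^{1/10}$ to produce the $r_l(\fqxol|v|^{10/3})^{9/10}$ summand, while the residual $|v|^2|\nabla\Pi_h|$ piece is controlled through \eqref{h1}, \eqref{wp1}, and $\|u\|_{L^{20/7}(Q_1)}$ to give the $r_l^{3/10}(\fqxol|v|^{10/3})^{3/5}(\iint|u|^{20/7})^{7/20}$ family; (b) the $\Pi_1$ term uses \eqref{wp2} together with the harmonicity of $\Pi_1$ (allowing \eqref{h1}) to yield the $r_l(\fqxol|v|^{10/3})^{3/10}(\iint|\nabla u|^2)^{1/2}$ family; (c) the $\Pi_2$ term is estimated by \eqref{wp3} and an analogous local-plus-harmonic splitting in the spirit of Lemma \ref{CW}, giving the $r_l^{3/5}(\fqxol|v|^{10/3})^{3/10}(\iint|u|^{20/7})^{7/10}$ family. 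Dividing each bound by $r_{l+1}^3\simeq r_l^3$ converts absolute integrals into the averages $\fqxol$ appearing on the right-hand side of the target.

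The chief obstacle is the projection-pressure term $\iint\phi\,u\otimes v:\nabla^2\Pi_h$. Applying \eqref{h1} to the harmonic $\nabla\Pi_h$ yields $\|\nabla^2\Pi_h\|_{L^{5/2}(\tilde Q_l)}\lesssim r_l^{13/10}(\iint_{Q_1}|u|^{20/7})^{7/20}$, and expanding $u=v-\nabla\Pi_h$ produces two pieces. The $v\otimes v:\nabla^2\Pi_h$ piece is estimated by H\"older against this $L^{5/2}$ bound and $v\in L^{10/3}$, producing the $r_l^{13/10}(\fqxol|v|^{10/3})^{3/5}(\iint|u|^{20/7})^{7/20}$ family. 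The remaining $\nabla\Pi_h\otimes v:\nabla^2\Pi_h$ piece is the most delicate: here one uses the harmonic $L^\infty$ bound for $\nabla\Pi_h$ on $\tilde B(1/2)$ and pairs with $v$ and $\nabla^2\Pi_h$ via a H\"older triple; it is precisely at this point that the hypothesis $\iint_{\tilde Q(r)}|v|^{10/3}\le r^5N$ is consumed, and the three ways of distributing the bound (with $N$, with $N$ and $u$-data, or entirely with $u$-data) generate the three subterms $N^{3/5}$, $N^{3/10}(\iint|u|^{20/7})^{7/20}$, and $(\iint|u|^{20/7})^{7/10}$, all carrying the common prefactor $r_l^{3/2}(\fqxol|v|^{10/3})^{3/10}$. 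A discrete iteration of the resulting recurrence $A_{l+1}\le A_l+E(l)$ from $l=k_0$ up to $l=k-1$ then telescopes to $A_k\le A_{k_0}+\sum_{l=k_0}^{k}E(l)$, which is the asserted bound \eqref{eq3.1}.
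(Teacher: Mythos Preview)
Your iteration scheme has a genuine gap at the step where you claim the recurrence $A_{l+1}\le A_l+E(l)$. With a standard cutoff $\phi_l$ localising from $\tilde Q_l$ to $\tilde Q_{l+1}$, the ``linear'' source term $\iint|v|^2(\partial_t\phi_l+\Delta\phi_l)$ is bounded by $Cr_l^{-2}\iint_{\tilde Q_l}|v|^2\le C\sup_t\int_{\tilde B_l}|v|^2$. After dividing through by $r_{l+1}^3=r_l^3/8$ to form $A_{l+1}$, this contributes $8C\cdot r_l^{-3}\sup_t\int_{\tilde B_l}|v|^2\le 8C\,A_l$, so the recurrence you actually obtain is $A_{l+1}\le 8C\,A_l+E(l)$ with $8C>1$. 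Iterating from $k_0$ to $k$ then produces a factor $(8C)^{k-k_0}$ in front of $A_{k_0}$ and geometric weights in front of the $E(l)$, which blows up as $k\to\infty$ and cannot yield the stated bound. Alternatively bounding the linear term directly by $(\fqxol|v|^{10/3})^{3/5}$ via H\"older gives a summand with no decaying $r_l^{\alpha}$ prefactor, so the sum over $l$ diverges as well.

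This is precisely why the paper (following Caffarelli--Kohn--Nirenberg) does \emph{not} iterate scale by scale. It applies the local energy inequality \eqref{wloc1} once, with test function $\phi\Gamma$, where $\Gamma$ is the backward heat kernel focussed at scale $r_k$ and $\phi$ is a single cutoff at scale $r_{k_0}$. The identity $\Gamma_t+\Delta\Gamma=0$ kills the linear term except on the support of $\nabla\phi$, which lives only at the outer scale $r_{k_0}$; this produces the single $\sup\,\fbxozero|v|^2$ term directly. On the left, the pointwise lower bound $\Gamma\ge c\,r_k^{-3}$ on $\tilde Q_k$ gives the correctly normalised $A_k$ in one shot. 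The nonlinear terms are then estimated by decomposing the integration over $\tilde Q_{k_0}$ into dyadic shells $\tilde Q_l\setminus\tilde Q_{l+1}$ (using $|\Gamma|\lesssim r_l^{-3}$, $|\nabla\Gamma|\lesssim r_l^{-4}$ on shell $l$), which is what generates the sums $\sum_{l=k_0}^{k}$. For the pressure pieces $\Pi_1,\Pi_2$, the paper inserts a further partition $\sum(\chi_l-\chi_{l+1})$ so that averages $\overline{\Pi_i}_{\tilde B_l}$ may be subtracted; the oscillation of $\Pi_1$ is controlled by \eqref{h2}, while that of $\Pi_2$ is controlled by Lemma~\ref{CW}, and it is \emph{here} (not in the $\nabla\Pi_h\otimes v:\nabla^2\Pi_h$ term as you suggest) that the hypothesis $\iint_{\tilde Q(r)}|v|^{10/3}\le r^5N$ enters, via the verification of \eqref{lem2.31} for $f=u\otimes u$ through the expansion \eqref{4.7}--\eqref{4.10}.
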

\begin{proof}
Without loss of generality, we suppose $(x_{0},t_{0})=(0,0)$.
We denote the backward heat kernel
$$
\Gamma(x,t)=\frac{1}{4\pi(r_{k}^2-t)^{3/2}}e^{-\frac{|x|^2}
{4(r_{k}^2-t)}}.
$$
In addition, consider the smooth cut-off functions below
$$\phi (x,t)=\left\{\ba
&1,\,~~(x,t)\in Q(r_{k_{0}+1}),\\
&0,\,~~(x,t)\in Q^{c}(\f{3}{2}r_{k_{0}+1});
\ea\right.
$$
 satisfying
$$
0\leq \phi ,\,\phi_{2}\leq1~~\text{and}~~~~r^{2}_{k_{0}}|\partial_{t}\phi  (x,t)|
+r^{l}_{k_{0}}|\partial^{l}_{x}\phi (x,t)|\leq C.
$$
To proceed further, we list some properties of the test function $\phi(x,t)\Gamma(x,t)$, whose deduction rests on elementary calculations.
\begin{enumerate}[(i)]
\item There is a constant $c>0$ independent of $r_{k}$ such that, for any $(x,t)\in Q(r_{k})$,
$$
 \Gamma(x,t)\geq c r_{k}^{-3}.
$$
\item For any $(x,t)\in Q( r_{k_{0}})$, we have
$$
|\Gamma(x,t)\phi(x,t)| \leq C r_{k}^{-3},~~~~~|\nabla\phi(x,t)\Gamma(x,t)| \leq C r_{k}^{-4}, ~~~~~|\phi(x,t) \nabla\Gamma(x,t)|\leq C r_{k}^{-4}. $$
\item For any $(x,t)\in Q(3r_{k_{0}}/4)\backslash Q(r_{k_{0}}/2)$, one can deduce that
     $$\Gamma(x,t)\leq Cr_{k_{0}}^{-3},~\partial_{i}\Gamma(x,t)\leq Cr_{k_{0}}^{-4},$$ which yields that
$$
|\Gamma(x,t)\partial_{t}\psi(x,t)|+|\Gamma(x,t) \Delta\psi(x,t)|+|\nabla\psi(x,t)\nabla\Gamma(x,t)| \leq Cr_{k_{0}}^{-5}.
$$
\item \label{property2} For any $(x,t)\in Q_{l} \backslash Q_{l+1} $,
    $$
    \Gamma\leq  C r_{l+1}^{-3},~\nabla\Gamma \leq C  r_{l+1}^{-4}.
    $$
\end{enumerate}

Now, setting $\varphi_{1}=\phi \Gamma$  in the  local energy inequality \eqref{wloc1} and utilizing the fact that $\Gamma_{t}+\Delta\Gamma=0$, we see that
  \be\ba
  &\int_{B_{1}}|v|^2\phi (x,t)\Gamma  + \int^{t}_{-r^{2}_{k_{0}}}\int_{B_{1}}\big|\nabla v\big|^2\phi (x,s)\Gamma  \nonumber\\  \leq&   \int^{t}_{-r^{2}_{k_{0}}}\int_{B_{1}} | v|^2( \Gamma\Delta \phi +\Gamma\partial_{t}\phi
  +2\nabla\Gamma\nabla\phi )   \nonumber\\&+\int^{t}_{-r^{2}_{k_{0}}}\int_{B_{1}}|v|^{2}v\cdot\nabla(\phi  \Gamma)-|v|^{2}\nabla\Pi_{h} \cdot\nabla\phi \nonumber\\
& +\int^{t}_{-r^{2}_{k_{0}}}\int_{B_{1}}\Gamma\phi ( v\otimes v-v\otimes\nabla\Pi_{h}:\nabla^{2}\Pi_{h} )  +\int^{t}_{-r^{2}_{k_{0}}}\int_{B_{1}} \Pi_{1}v\cdot\nabla(\Gamma\phi )  +\int^{t}_{-r^{2}_{k_{0}}}\int_{B_{1}}  \Pi_{2}v\cdot\nabla(\Gamma\phi )  \label{loc21}
\ea\ee
where
$$
\nabla\Pi_{1}=\mathcal{W}_{2,B_{1}}(\Delta u),~~~\nabla\Pi_{2}=-\mathcal{W}_{\f{20}{7},B_{1}}(\nabla\cdot(u\otimes u) ).
$$

First, we present the low bound estimates of the terms on the left hand side of this inequality.
Indeed, with the help of \eqref{property2}, we find
 $$\int_{B_{k}} |v|^{2} \phi  \Gamma\geq C{\fbx} |v|^{2},$$
  and
$$
\int^{t}_{-r^{2}_{k_{0}}}\int_{B_{1}}\phi  \Gamma
 |\nabla v |^{2} \geq r_{k}^{-n}\iint_{Q _{k}}
 |\nabla v |^{2} .$$
Having observed that the support of  $\partial_{t}\phi $
is included in
$\tilde{Q}(\f{3r_{k_{0}}}{4})/\tilde{Q}(\f{r_{k_{0}}}{2}),$
 we get
$$\int^{t}_{-r^{2}_{k_{0}}}\int_{B_1}
 |v|^{2}
\Big|\Gamma\Delta \phi +\Gamma\partial_{t}\phi +2\nabla\Gamma\nabla\phi \Big|
\leq C\sup_{-r_{k_{0}}^{2}\leq t\leq0}\fbxozeroo |v|^{2} .
$$
  H\"older's inequality and \eqref{property2} enable us to write  that
\begin{align}
&\iint_{Q_{k_{0}}}|v|^{2}v\cdot\nabla(\phi  \Gamma) d  \tau \nonumber\\
\leq&\sum^{k-1}_{l=k_{0}}\iint_{Q_{l}/Q_{l+1}}
  |v|^{3}  |\nabla(\phi  \Gamma)|  +\iint_{Q_{k}}  |v|^{3} |\nabla(\phi  \Gamma)|   \nonumber\\
\leq&\sum^{k}_{l=k_{0}}r_{l}^{-4}\iint_{Q_{l}}
   |v|^{3}  \nonumber\\
 \leq& C\sum^{k}_{l=k_{0}} r_{l}\Big(\fqxolo |v|^{\f{10}{3}}   \Big)^{\f{9}{10}}\nonumber.
\end{align}
Following the lines of reasoning which led to the last inequality,   we have
\begin{align}
&\iint_{Q_{k_{0}}}|v|^{2}\nabla\Pi_{h} \cdot\nabla(\phi\Gamma)  \nonumber\\
\leq&\sum^{k}_{l=k_{0}}r_{l}^{-4}\iint_{Q_{l}}
 |v|^{2}| \nabla\Pi_{h}|  \nonumber\\
\leq& C\sum^{k}_{l=k_{0}}r^{ \f{3}{10}}_{l} \B(\fqxolo  |v|^{\f{10}{3} } \Big)^{\f{3}{5}}
\Big(\iint_{Q_{1} } |u|^{\f{20}{7}}   \Big)^{\f{7}{20}}.
\end{align}
Likewise, we have
\begin{align}
&\iint_{Q_{k_{0}}}|v|^{2}|\nabla^{2} \Pi_{h}| (\phi\Gamma) \nonumber\\
\leq&\sum^{k}_{l=k_{0}}r_{l}^{-3}\iint_{Q_{l}}
 |v|^{2}| \nabla^{2} \Pi_{h}|  \nonumber\\
\leq& C\sum^{k}_{l=k_{0}}r^{ \f{13}{10}}_{l} \B(\fqxolo  |v|^{\f{10}{3} } \Big)^{\f{3}{5}}
\Big(\iint_{Q_{1} } |u|^{\f{20}{7}}   \Big)^{\f{7}{20}}\nonumber.
\end{align}
Using H\"older's inequality  again, \eqref{property2}, \eqref{h1} and \eqref{p1}, we infer that
\begin{align}
 &\iint_{Q_{k_{0}}}\phi  \Gamma   |v| |\nabla\Pi_{h}||\nabla^{2}\Pi_{h}|\nonumber\\
\leq&  C\sum^{k}_{l=k_{0}}r_{l}^{-n}
\Big(\iint_{Q_{l} } |v|^{\f{10}{3} }   \Big)^{\f{3}{10}}
\Big(\iint_{Q_{l} } |\nabla\Pi_{h}|^{\f{20}{7} }   \Big)^{\f{7}{20}}\Big(\iint_{Q_{l} } |\nabla^{2} \Pi_{h}|^{\f{20}{7} }   \Big)^{\f{7}{20}}\nonumber\\
\leq& C\sum^{k}_{l=k_{0}}r^{  \f35}_{l} \B(\fqxolo  |v|^{\f{10}{3} } \Big)^{\f{3}{10}}
\Big(\iint_{Q_{1} } |u|^{\f{20}{7}}   \Big)^{\f{7}{10}}\nonumber.
\end{align}
Set $\chi_{l}=1$ on $|x|\leq7/8 r_{l}$ and $\chi_{l}=0 $ if $ |x|\geq r_{l}$.
$\chi_{k_{0}}\Gamma=\Gamma$ on $Q_{k_{0}}$
By the support of $(\chi_{l}-\chi_{l+1})$, we derive from \eqref{property2}  that $|\nabla((\chi_{l}-\chi_{l+1})\phi  \Gamma)|\leq Cr^{-4}_{l+1}$.
With the help  of  \eqref{property2}  again, we see that $|\nabla( \chi_{k} \phi  \Gamma)|\leq Cr^{-4}_{k} $. Therefore, it holds
\begin{align}
\iint_{Q_{k_{0}}}v\cdot\nabla(\phi  \Gamma) \Pi_{1}=&
\sum^{k-1}_{l=k_{0}}\iint_{Q_{l}}v\cdot\nabla((\chi_{l}-\chi_{l+1})\phi  \Gamma) \Pi_{1}+\iint_{Q_{k}}v\cdot\nabla( \chi_{k} \phi  \Gamma) \Pi_{1}\nonumber\\
=&
\sum^{k-1}_{l=k_{0}}\iint_{Q_{l}}
v\cdot\nabla((\chi_{l}-\chi_{l+1})\phi  \Gamma) (\Pi_{1}- \overline{{\Pi_{1}}}_{l})+\iint_{Q_{k }}u\cdot\nabla( \chi_{k} \phi  \Gamma) (\Pi_{1}-\overline{{\Pi_{1}}}_{k})
\nonumber\\
\leq& C
\sum^{k-1}_{l=k_{0}}r^{-4}_{l+1}\iint_{Q_{l}}
|v||\Pi_{1}- \overline{{\Pi_{1}}}_{l}|+r^{-4}_{k}\iint_{Q_{k }}|v||\Pi_{1}-\bar{\Pi_{1}}_{k}|\nonumber\\
=&:I+II.
\end{align}
The H\"older inequality, \eqref{h2} and \eqref{p2} give
\begin{align}
I\leq& C
\sum^{k-1}_{l=k_{0}}r^{-4}_{l+1}
\Big(\iint_{\tilde{Q}_{l} } |v|^{\f{10}{3} }   \Big)^{\f{3}{10}}
\Big(\iint_{\tilde{Q}_{l} } |\Pi_{1}- \overline{{\Pi_{1}}}_{l}|^{2 }   \Big)^{\f{1}{2}} r_{l}\nonumber\\\leq& C
\sum^{k-1}_{l=k_{0}}r^{-\f52}_{l+1}
 \B(\fqxolo  |v|^{\f{10}{3} } \Big)^{\f{3}{10}} r_{l}^{ \f{7}{2} }
\Big(\iint_{\tilde{Q}_{1} } |\Pi_{1}- \overline{{\Pi_{1}}}_{1}|^{2 }   \Big)^{\f{1}{2}}  \nonumber\\
\leq& C
\sum^{k-1}_{l=k_{0}}r_{l+1}
 \B(\fqxolo  |v|^{\f{10}{3} } \Big)^{\f{3}{10}}
\Big(\iint_{\tilde{Q}_{1} } |\Pi_{1}|^{2 }   \Big)^{\f{1}{2}}\nonumber\\
\leq& C
\sum^{k-1}_{l=k_{0}}r_{l+1}
 \B(\fqxolo  |v|^{\f{10}{3} } \Big)^{\f{3}{10}}
\Big(\iint_{\tilde{Q}_{1} } |\nabla u|^{2 }   \Big)^{\f{1}{2}},
\end{align}
and
$$\ba
II
\leq& C
 r_{k}
 \B(\fqxolo  |v|^{\f{10}{3} } \Big)^{\f{3}{10}}
\Big(\iint_{\tilde{Q}_{1} } |\nabla u|^{2 }   \Big)^{\f{1}{2}},
\ea$$
which turns out that
$$\iint_{\tilde{Q}_{k_{0}}}v\cdot\nabla(\phi  \Gamma) \Pi_{1}\leq C\sum^{k }_{l=k_{0}} r_{l}\B(\fqxolo  |v|^{\f{10}{3} } \Big)^{\f{3}{10}}
\Big(\iint_{\tilde{Q}_{1} } |\nabla u|^{2 }   \Big)^{\f{1}{2}}.
$$
Note that
\be\label{4.7}u\otimes u=v\otimes v-v\otimes\nabla\Pi_{h}-\nabla\Pi_{h}\otimes v+\nabla\Pi_{h}\otimes \nabla\Pi_{h}.\ee
For $r_{k}\leq r\leq r_{k_{0}}$, we compute directly that
\be\label{4.8}
\iint_{Q(r)}
 |v\otimes v- \overline{ {v\otimes v }}_{l}|^{10/7} \leq \iint_{Q(r)}
 |v|^{20/7}\leq  Cr^{5}\B(\fqxolor  |v|^{\f{10}{3} } \Big)^{6/7}\leq   Cr^{5}N^{6/7}.
\ee
The H\"older inequality and \eqref{h1} ensure  that
\begin{align}
\iint_{Q(r)}
 |v\otimes\nabla\Pi_{h}- \overline{v\otimes\nabla\Pi_{h}}_{l}|^{10/7} &\leq C\iint_{Q(r)}
 |v\otimes\nabla\Pi_{h}|^{10/7}\nonumber\\
&\leq C \B(\iint|v|^{\f{10}{3}}\B)^{\f37}\B(\iint|\nabla\Pi_{h}|^{\f{20}{7}}\B)^{\f12}r^{\f{5}{14}}
\nonumber\\
&\leq C r^{4}\B(\fqxolor|v|^{\f{10}{3}}\B)^{\f37} \B(\iint_{Q_{1}}|u|^{\f{20}{7}}\B)^{\f12}\nonumber\\
&\leq C r^{4}N^{\f37} \B(\iint_{Q_{1}}|u|^{\f{20}{7}}\B)^{\f12}.\label{4.9}
\end{align}
In view of Poincar\'e inequality for a ball, H\"older's inequality, \eqref{h1} and \eqref{p1}, we arrive at
\begin{align}
&\iint_{Q(r)}
 |\nabla\Pi_{h}\otimes \nabla\Pi_{h}- \overline{\nabla\Pi_{h}\otimes \nabla\Pi_{h}}_{l}|^{10/7}\nonumber\\
\leq&
Cr^{ 10/7}\B(\iint_{Q(r)}|\nabla\Pi_{h}|^{20/7}\B)^{\f12}\B(\iint_{Q(r)}| \nabla^{2} \Pi_{h}|^{20/7}\B)^{\f12}\nonumber\\
\leq&
Cr^{ \f{31}{7}}\B(\iint_{Q_{1}}|\nabla\Pi_{h}|^{20/7}\B)^{\f12}\B(\iint_{Q_{1}}| \nabla\Pi_{h}|^{20/7}\B)^{\f12}\nonumber\\
\leq&
Cr^{ \f{31}{7}}\B(\iint_{Q_{1}}|u|^{20/7}\B).\label{4.10}
\end{align}
We deduce from
\eqref{4.7}-\eqref{4.10} that
\begin{align}
\iint_{Q(r)}
 |u\otimes u- \overline{ (u\otimes u )_{r}} |^{10/7} \leq &Cr^{5} N^{6/7}+
r^{4} N^{\f37} \B(\iint_{Q_{1}}|u|^{\f{20}{7}}\B)^{\f12} +Cr^{\f{31}{7}}\B(\iint_{Q_{1}}|u|^{20/7}\B) \nonumber\\
 \leq &Cr^{4} \B\{N^{6/7}+
  N^{\f37} \B(\iint_{Q_{1}}|u|^{\f{20}{7}}\B)^{\f12} +C \B(\iint_{Q_{1}}|u|^{20/7}\B)\B\}.\label{4.13}\end{align}
 With
   \eqref{4.13} in hand, we can apply
Lemma \ref{CW} to obtain that
\be\label{4.12}\ba
\iint_{Q(r)}
 |\Pi_{2}- \overline{ \Pi_{2}}_{B(r)}|^{10/7} &\leq
 Cr^{4} \B\{N^{6/7}+
  N^{\f37} \B(\iint_{Q_{1}}|u|^{\f{20}{7}}\B)^{\f12} +C \B(\iint_{Q_{1}}|u|^{20/7}\B)\B\}.
 \ea\ee
 Particulary, for any $k\leq  l\leq k_{0}$, it holds
 \be\label{4.20}\ba
\iint_{Q_{l}}
 |\Pi_{2}- \overline{ \Pi_{2}}_{B_{l}}|^{10/7} &\leq
 Cr_{l}^{4} \B\{N^{6/7}+
  N^{\f37} \B(\iint_{Q_{1}}|u|^{\f{20}{7}}\B)^{\f12} +C \B(\iint_{Q_{1}}|u|^{20/7}\B)\B\}.
 \ea\ee
By the H\"older inequality, we see that
\be\label{4.11}
r_{l}^{-4}\iint_{Q_{l}}|v||\Pi_{2}-\overline{ \Pi_{2}}_{B(r)}|\leq
Cr_{l}^{-4}\Big(\iint_{\tilde{Q}_{l} } |v|^{\f{10}{3} }   \Big)^{\f{3}{10}}
\Big(\iint_{\tilde{Q}_{l} } |\Pi_{2}{ -\overline{(\Pi_{2})}_{B_{l}}}|^{\f{10}{7} }   \Big)^{\f{7}{10}}.
\ee
Plugging \eqref{4.12} into \eqref{4.11}, we have
$$\ba
&\iint_{Q_{k_{0}}}v\cdot\nabla(\phi  \Gamma) \Pi_{2}\\
\leq& C
\sum^{k-1}_{l=k_{0}}r^{-4}_{l+1}\iint_{Q_{l}}
|v||\Pi_{2}- \overline{{\Pi_{2}}}_{l}|+r^{-4}_{k}\iint_{Q_{k }}|v||\Pi_{2}-\overline{ {\Pi_{2}}}_{k}|\\
\leq& C
\sum^{k}_{l=k_{0}}r^{\f{3}{10}}_{l}\B(\fqxolo  |v|^{\f{10}{3} } \Big)^{\f{3}{10}} \B\{N^{6/7}+
  N^{\f37} \B(\iint_{Q_{1}}|u|^{\f{20}{7}}\B)^{\f12} +C \B(\iint_{Q_{1}}|u|^{20/7}\B)\B\}^{\f{7}{10}}.
  \ea$$

Finally, collected these estimates  leads  to \eqref{eq3.1}.
\end{proof}

With Proposition \ref{keyinindu} at our disposal, we will now present the proof of
  Theorem \ref{the1.1}.
\begin{proof}[Proof of Theorem \ref{the1.2}]
By the  interior estimate \eqref{h1}  of harmonic function   and \eqref{wp1}, we have
 \be\| \nabla \Pi_{h}\|_{L^{\infty}
(\tilde{B}(1/8))}\leq C\|\nabla \Pi_{h}\|_{L^{20/7 }(B(1))}\leq   C\|u\|_{L^{20/7 }(B(1))}.\label{last1}\ee
Assume for a while we have proved that, for any Lebesgue point $(x_{0},t_{0})\in Q(1/8)$,
\be\label{temp}
|v(x_{0},t_{0})|\leq C.
\ee
We derive from \eqref{last1} and \eqref{temp} that
$$
\| u\|_{L^{20/7,\infty}
(\tilde{Q}(1/8))}\leq \| \nabla \Pi_{h}\|_{L^{20/7,\infty}(\tilde{Q}(1/8))}+
\| v\|_{L^{20/7,\infty}(\tilde{Q}(1/8))}\leq C\|u\|_{L^{20/7 }(Q(1))}.
$$
By the well-known Serrin  regularity criteria in \cite{[Serrin]}, we know  that $(0, 0)$ is a regular point.
Therefore, it remains to prove \eqref{temp}.
In what follows, let   $(x_{0},t_{0})\in Q(1/8)$ and $r_{k}=2^{-k}$.
According to the Lebesgue differentiation theorem, it  suffices to show
\be\label{goal}
 \fqxoo |v|^{\f{10}{3}}
\leq \varepsilon_{1}^{2/3}, ~~k\geq3.\ee
First, we show that \eqref{goal} is valid for $k=3$.  Indeed, from \eqref{keyl} in section  \ref{sec3}, Proposition \ref{lebp} and hypothesis \ref{jww}, we know that
 \begin{align}
  &\sup_{-(\f{3}{8})^{2}\leq t\leq0}\int_{B(\f{3}{8})}|v |^2   d  x+ \iint_{Q(\f{3}{8})}\big|\nabla v\big|^2  d  x d  \tau\nonumber\\\leq&   C
 \| u \|^{4}_{L^{20/7}(Q(\f{1}{2}))}
 +C\| u \|^{2}_{L^{20/7}(Q(\f{1}{2}))}+\f{1}{16}\| \nabla u\|^{2}_{L^{2}(Q(\f{1}{2}))},
  \nonumber\\\leq&      C\varepsilon^{^{7/10}}. \end{align}
In light of Sobolev embeddings  and the Young inequality, we see that
\be\label{eq3.5}\ba
 \B(\iint_{Q(\f{3}{8})}|v |^{\f{10 }{3}}\B)^{\f{3}{10 }}
&\leq C\B(\sup_{-(\f{3}{8})^{2}\leq t<0}\int_{B(\f{3}{8})}|v|^{2}\B)^{1/2}+ C\B(\iint_{Q (\f{3}{8})} |\nabla  v |^{2}\Big)^{1/2}.
\ea\ee
 It turns out that
$$
   \fqxoth |v|^{\f{10}{3} }\leq C\varepsilon_{1}^{\f{7}{6}}.$$
This proves \eqref{goal} in the case $k=3$.
Now, we assume that, for any $3\leq l\leq k$,
$$
  \fqxol |v|^{\f{10}{3} }
\leq \varepsilon_{1}^{2/3}.$$
Furthermore, there holds, for any $r_{k}\leq r\leq r_{3}$
$$
  \fqxolorr |v|^{\f{10}{3} }
\leq C\varepsilon_{1}^{2/3}.$$
For any $3\leq i\leq k$, by Proposition \ref{keyinindu} with $N=C\varepsilon_{1}^{2/3}$, \eqref{jww} and the above  induction hypothesis, we find that
\begin{align}
&\sup_{-r_{k}^{2}\leq t-t_0\leq 0}\fbxo |v|^{2}
+r_{k}^{-n}\iint_{\tilde{Q} _{k}}
 |\nabla v |^{2}\nonumber\\
\leq&  C\sup_{-r_{3}^{2}\leq t-t_0\leq 0}\fbxozero|v|^{2}+C\sum^{k}_{l=k_{0}} r_{l}\Big(\fqxolo |v|^{\f{10}{3}}   \Big)^{\f{9}{10}} \nonumber\\&
+C\sum^{k}_{l=3}r^{ \f{3}{10}}_{l} \B(\fqxolo  |v|^{\f{10}{3} } \Big)^{\f{3}{5}}
\Big(\iint_{Q_{1} } |u|^{\f{20}{7}}   \Big)^{\f{7}{20}}\nonumber\\&+C\sum^{k}_{l=3}r^{ \f{13}{10}}_{l} \B(\fqxolo  |v|^{\f{10}{3} } \Big)^{\f{3}{5}}
\Big(\iint_{Q_{1} } |u|^{\f{20}{7}}   \Big)^{\f{7}{20}}\nonumber\\& +C\sum^{k}_{l=3}r^{  \f35}_{l} \B(\fqxolo  |v|^{\f{10}{3} } \Big)^{\f{3}{10}}
\Big(\iint_{Q_{1} } |u|^{\f{20}{7}}   \Big)^{\f{7}{10}}\nonumber\\&
+C\sum^{k }_{l=3} r_{l}\B(\fqxolo  |v|^{\f{10}{3} } \Big)^{\f{3}{10}}
\Big(\iint_{\tilde{Q}_{1} } |\nabla u|^{2 }   \Big)^{\f{1}{2}}\nonumber\\&
+C
\sum^{k}_{l=3}r^{\f{3}{10}}_{l}\B(\fqxolo  |v|^{\f{10}{3} } \Big)^{\f{3}{10}}\B\{N^{3/5}+
 N^{\f{3}{10}} \B(\iint_{Q_{1}}|u|^{\f{20}{7}}\B)^{\f{7}{20}} + \B(\iint_{Q_{1}}|u|^{20/7}\B)^{\f{7}{10}} \B\}\nonumber\\\leq&  C\varepsilon^{7/5}+C\sum^{k}_{l=3} r_{l}\varepsilon^{\f35}
+C\sum^{k}_{l=3}r^{ \f{3}{10}}_{l} \varepsilon^{\f{2}{5}}\varepsilon^{\f{7}{20}}+
C\sum^{k}_{l=3}r^{ \f{13}{10}}_{l} \varepsilon^{\f{2}{5}}
\varepsilon^{\f{7}{20}}\nonumber\\& +C\sum^{k}_{l=3}r^{  \f35}_{l} \varepsilon^{\f{1}{5}}
\varepsilon^{\f{7}{10}}
+C\sum^{k }_{l=3} r_{l} \varepsilon^{\f{1}{5}}
\varepsilon^{\f{7}{20}}
+C
\sum^{k}_{l=3}r^{\f{3}{10}}_{l}\
\varepsilon^{\f{1}{5}}\B\{\varepsilon^{\f{2}{5}}+
\varepsilon^{\f{1}{5}} \varepsilon^{\f{7}{20}}+ \varepsilon^{\f{7}{10}} \B\}\nonumber\\
\leq& C \varepsilon^{\f{11}{20}}.\label{eq3.1}
\end{align}
Invoking   the
Gagliardo-Nirenberg inequality, we deduce that
$$\ba
\int_{\tilde{B}_{k+1} }|v|^{\f{10}{3}}dx
&\leq  C\B(\int_{\tilde{B}_{k}}|v |^{2}\B)^{\f{2}{3}}
\B[\B(\int_{\tilde{B}_{k} } |\nabla  v |^{2}\Big)^{1/2}+r_{k}^{-1}
\B(\int_{\tilde{B}_{k}}|v|^{2}\B)^{1/2}\B]^{2},
\ea$$
which means
$$\ba
\iint_{\tilde{Q}_{k+1} }|v|^{\f{10}{3}}
\leq& C\B(\sup_{-r^{2 }_{k}\leq t-t_{0}<0}\int_{\tilde{B}_{k}}|v |^{2}\B)^{\f{2}{3}}
 \B(\iint_{\tilde{Q}_{k} } |\nabla  v |^{2}\Big) +
\B(\sup_{-r^{2 }_{k}\leq t-t_{0}<0}\int_{\tilde{B}_{k}}|v|^{2}\B)^{5/3}.
\ea$$
This inequality, combined with \eqref{eq3.1}, implies that
\be\label{last2}\ba
\f{1}{r^{5 }_{k+1}}\iint_{\tilde{Q}_{k+1}}
|v|^{\f{10}{3}}\leq&  C\B( \f{1}{r^{3}_{k}}\sup_{-r^{2 }_{k}\leq t-t_{0}<0}\int_{\tilde{B}_{k}}|v |^{2}\B)^{\f53}\\&
+C\B( \f{1}{r^{3}_{k}}\sup_{-r^{2 }_{k}\leq t-t_{0}<0}\int_{\tilde{B}_{k}}|v |^{2}\B)^{\f32}\B(r_{k}^{-3}\iint_{\tilde{Q} _{k}}
|\nabla v |^{2}\B) \\
\leq& C  \varepsilon_{1}^{\f{11}{12}}.
\ea
\ee
Collecting the above bounds, we eventually conclude
that
$$\iint_{\tilde{Q}_{k+1}} \!\!\!\!\!\!\!\!\!\! \! \!\!\!\!\!\!\!\!-\hspace{-0.16cm}-\hspace{-0.15cm}~~\,~
|v|^{\f{10}{3} }\leq  \varepsilon_{1}^{2/3}. $$
This completes the proof of this theorem.
\end{proof}

\section*{Acknowledgement}
Jiu was partially supported by the National Natural Science Foundation of China (No.11671273, No.11231006).
		The research of Wang was partially supported by  the National Natural		Science Foundation of China under grant No. 11601492 and the
the Youth Core Teachers Foundation of Zhengzhou University of
Light Industry.
			The research of Zhou	is supported in part by the National Natural Science
Foundation of China under grant No. 11401176  and Doctor Fund of Henan Polytechnic University (No. B2012-110).

\end{document}